\theoremstyle{plain}
\newtheorem{theorem}{\noindent\bf Theorem}[section]
\newtheorem{lemma}[theorem]{\noindent\bf Lemma}
\theoremstyle{remark}
\newtheorem{remark}[theorem]{\noindent\bf Remark}
\theoremstyle{definition}
\newtheorem{definition}[theorem]{\noindent\bf Definition}
\numberwithin{equation}{section}
\def\be{\begin{eqnarray}}%%
\def\ee{\end{eqnarray}}%%
\def\ben{\begin{eqnarray*}}%%
\def\een{\end{eqnarray*}}%%
\def\benum{\begin{enumerate}}%%
\def\eenum{\end{enumerate}}%%
\newcommand{\lr}{\left(}
\newcommand{\rr}{\right)}
\title{\bf \Large Rubio de Francia Extrapolation Theorems \\for Quasi-Monotone Functions}
\author{Arun Pal Singh$^1$, Rahul Panchal$^2$, Pankaj Jain$^3$ and Monika Singh$^4$} 
\date{}
\begin{document}
\maketitle

\indent  \emph{$^1$Department of Mathematics,
Dyal Singh College (University of Delhi),
Lodhi Road, \\  
\indent ~~Delhi - 110003, INDIA}~~
Email : arunpalsingh@dsc.du.ac.in \\ 

\indent  \emph {$^2$Department of Mathematics,
University of Delhi, Delhi - 110007, INDIA} \\
\indent ~~Email: drrpanchal0@gmail.com \\

\indent  \emph {$^3$Department of Mathematics, 
South Asian University,
Akbar Bhawan,  Chanakya Puri, \\
\indent ~~New Delhi - 110021, INDIA}~~
Email :  pankaj.jain@sau.ac.in; pankajkrjain@hotmail.com \\

\indent  \emph {$^4$Department of Mathematics
Lady Shri Ram College for Women (University of Delhi), \\
\indent ~~Lajpat Nagar, Delhi - 110024, INDIA}~~
Email : monikasingh@lsr.du.ac.in

\bigskip

\begin{abstract}
\noindent We prove Rubio de Francia extrapolation results in Lebesgue and grand Lebesgue spaces for quasi monotone functions with $QB_{\beta,p}$ weights. The extrapolation in Lebesgue spaces with the weight class $QB_{\beta,\infty}$  has also been investigated. As an application, we characterize the boundedness of the Hardy averaging operator for quasi monotone functions in the grand Lebesgue spaces.

\bigskip\noindent
\\2010 \emph{AMS Subject Classification.} 26D10, 26D15, 46E35.\\
\emph{Key words and Phrases.} Rubio de Francia extrapolation;  grand Lebesgue space;  $QB_{\beta,p}$-weights; Hardy averaging operator, quasi-monotone functions.
\end{abstract}

\thispagestyle{empty}
\section{Introduction}
We shall denote by $\mathcal{M},$ the set of all measurable functions defined and finite almost everywhere (a.e.) on $\mathbb{R}^+.$  Also, $\mathcal{M}^{+}\subset \mathcal{M}$ and $\mathcal{M}^+_{\downarrow}\subset \mathcal{M}^{+}$ will denote, respectively, the cones of non-negative and non-negative non-increasing ($\downarrow$) functions in $\mathcal{M}.$ By a weight $w,$ we mean a function in $\mathcal{M}^{+}$ which is locally integrable as well. For a weight $w$ and $1 \le p<\infty,$ denote by $L_w^p,$ the weighted Lebesgue space consisting of all $f\in \mathcal{M}$ such that
\[
\|f\|_{L_w^p}:= \lr\int_0^\infty |f|^p w\rr^{1/p}<\infty.
\]
A weight $w$ is said to be in the Muckenhoupt class $A_p,~1<p<\infty,$ if
\[ 
[w]_{A_p}:= \sup_J \lr\frac{1}{|J|}\int_J w \rr \lr \frac{1}{|J|} \int_J w^{-p'/p}\rr^{p-1} <\infty, 
\]
and in class $A_1,$ if 
\[ [w]_{A_1}:= \sup_J  \text{ess}\sup_{x\in J} \frac{W(J)}{w(x)|J|}< \infty,\]
where supremum is taken over all non-degenerate intervals $J\subset \mathbb{R}^+,~ \frac{1}{p}+\frac{1}{p'}=1$ and $W(J):=\int_J w(x)dx.$ 
The weight class $A_p$ is found to be useful in so many ways. It characterizes the boundedness of the maximal operator \cite{mu2} and Riesz potential \cite{cg} in Lebesgue spaces. Moreover, this class also characterizes the boundedness of these operators in grand Lebesgue spaces \cite{fgj}, \cite{m}. Another beauty of the $A_p$-class of weights can be realized via the celebrity extrapolation result of J.L. Rubio de Francia \cite{rdf}, which asserts that if a sublinear operator $T$ is bounded on $L_w^{p_0}$ for every $w\in A_{p_0} ~(p_0\ge 1,$ fixed) with the constant of inequality depending only on $[w]_{A_{p_0}},$ then for every $1<p<\infty,~T$ is bounded on $L_w^p$ for every $w\in A_p.$ This extrapolation result was further re-investigated and explored by many people (see \cite{dcump2} and the references therein) and now it is known that the operator $T$ has no role to play. In fact, it is known that  if $(f,g)$ is a pair of non-negative measurable functions such that for some $1\leq p_0<\infty,$ the inequality
\[ \int_0^\infty f^{p_0}(x)w(x)dx\leq C \int_0^\infty g^{p_0}(x)w(x)dx 
\]
holds for every $w\in A_{p_0}$ with constant $C$ depending on $[w]_{A_{p_0}},$ then for every $1<p<\infty,$ the inequality
\[
\int_0^\infty f^p(x)w(x)dx\leq C \int_0^\infty g^p(x)w(x)dx
\]
holds for every $w\in A_p$ with constant $C$ depending on $[w]_{A_p}.$

This theory has been generalized to $A_\infty$-weights also, see \cite{dcump1}. In \cite{cl}, Carro and Lorente established a parallel extrapolation theory for a pair of functions from $\mathcal{M}^+_{\downarrow}$ in the framework of $B_p$-class of weights: A weight $w$ is said to belong to the class $B_p ~(p>0)$  if there exists a constant $C>0$ such that the inequality 
\[
\int_r^\infty \lr\frac{r}{x}\rr^p w(x)dx \leq C\int_0^r w(x)dx
\]
holds for every $r>0$. Like the $A_p$-class of weights, the weight class $B_p$ is also an important class of weights. It characterizes the boundedness of the Hardy averaging operator
\[
Hf(x) := \frac{1}{x}\int_0^x f(t)dt
\] 
in $L_w^p$ spaces for $f\in \mathcal{M}^+_{\downarrow}$ (see \cite{and, s}) and also in grand Lebesgue spaces (defined in Section 3) for $f\in \mathcal{M}^+_{\downarrow}$ \cite{jk, m}. These characterizations, in fact, are equivalent to the boundedness of the maximal operator, respectively in, Lorentz space $\Lambda^p(w)$ \cite{am} and grand Lorentz space $\Lambda^{p)} (w)$ \cite{jk}.

In this paper, we consider quasi non-increasing functions, the class of such functions being denoted by $Q_\beta :$ A function $f\in \mathcal{M^+}$ is said to belong to $Q_\beta, ~\beta\in \mathbb{R},$ if $x^{-\beta}f(x)$ is non-increasing. Clearly $\mathcal{M}^+_{\downarrow} = Q_0.$
For the functions $f \in Q_\beta,$ Bergh, Burenkov and Persson \cite{bbl} investigated Hardy's inequality with power type weights, while for general weights it has been proved in \cite{pma} that the inequality 
\[
\lr \int_0^\infty {\lr \frac{1}{x}\int_0^x f(t)dt \rr}^p w(x)dx\rr \leq C\int_0^\infty f^p(x) w(x)dx, ~1\leq p<\infty 
\]
holds for all $f\in Q_{\beta}$ if and only if $w\in QB_{\beta,p},~\beta >-1,$ i.e.,
\be \label{eqn aa}
\int_r^\infty \lr\frac{r}{x}\rr^p w(x)dx \leq C\int_0^r {\lr \frac{x}{r} \rr}^{\beta p} w(x)dx,~ r>0.
\ee
Note that for $\beta = 0,$ the weight class $QB_{\beta,p}$ reduces to the class $B_p.$ In the present paper, we define a variant of the class $QB_{\beta,p},$ to be denoted by $\widehat{Q}B_{\beta,p},$ and prove the extrapolation results for this class of weights, as well as for the weight class
\[QB_{\beta,\infty}:= \bigcup_{p>0} QB_{\beta,p}~.\]
Further, we prove the extrapolation result for quasi-monotone functions in the frame of grand Lebesgue spaces. As an application, we prove the boundedness of the Hardy averaging operator for quasi-monotone functions in the grand Lebesgue spaces. Our results generalize the extrapolation results of Carro and Lorente \cite{cl} and Meskhi \cite{amksh}. Throughout, all the functions used in this paper are assumed to be non-negative and measurable.

\section{Extrapolation results in Lebesgue spaces}
\noindent
For $p>0,$ we say that a weight $w\in QB_{\beta,\psi,p}$ if 
\be \label{eqn *}
\int_r^\infty \lr \frac{\Psi(r)}{\Psi(x)}\rr^p w(x)\,dx \le C \int_0^r \lr \frac{\Psi(x)}{\Psi(r)}\rr^{\beta p} w(x)~dx,r>0
\ee
for some constant $C>0$ and $\Psi (x):=\int_0^x \psi (t)dt,$ where $\psi$ is a non-negative, non-increasing locally integrable function, i.e., $\psi\in L_{loc}^1.$  For $\psi \equiv 1,$ the weight class $QB_{\beta,\psi,p}$ reduces to the class $QB_{\beta,p}.$\\

In \cite{pma}, the class $QB_{\beta,\psi,p}$ was used to characterize the boundedness of the operator 
\[
S_{\psi}f(x):=\frac{1}{\Psi(x)}\int_0^{x} f(t)\psi (t)dt
\]
on the cone of functions $f\in Q_\beta.$ Precisely, the following was proved :\\

\noindent {\bf Theorem A} \cite{pma}.
\emph {Let $p \geq 1$ and $-1<\beta\leq 0$. Then the inequality
\[
\int_0^\infty {\lr S_\psi f\rr}^p(x)w(x)dx \leq C'\int_0^\infty f^p(x)w(x)dx
\]
holds for all $f\in Q_\beta$ if and only if $w\in QB_{\beta,\psi,p},$ where $C'= \frac{C+1}{{(\beta+1})^p}$ and $C$ is as in \emph {(\ref{eqn *})}.}\\

We define $QB_{\beta,\psi,p}$-constant for a weight  $w \in QB_{\beta,\psi,p}$ as follows
\begin{align} \label{eq23}
[w]_{QB_{\beta,\psi,p}} := & \inf  \left\{ D: \int_r^\infty \lr \frac{\Psi(r)}{\Psi(x)}\rr^p w(x)\,dx \le (D-1) \int_0^r \lr \frac{\Psi(x)}{\Psi(r)}\rr^{\beta p}w(x)dx,r>0 \right\}.
\end{align} 
\begin{remark} \label{rmk21}
Note that
\begin{enumerate}
\item  $[w]_{QB_{\beta,\psi,p}} >1.$ 
\item For $-1<\beta \le 0$ and $p \le q,$ we have
$QB_{\beta,\psi,p} \subset QB_{\beta,\psi,q}.$
\end{enumerate}
\end{remark}
We begin with the following :
\begin{lemma} \label{lemma 0}
Let the function $\varphi$ be non-decreasing $(\uparrow)$ defined on $(0,\infty), ~f,g\in Q_\beta ~(\beta>-1), 0<p_0<\infty,~\psi\in L_{loc}^1$ be $\downarrow$ and $ \displaystyle \lim_{x\rightarrow\infty}\Psi(x)=\infty.$ Suppose that for each $w\in QB_{\beta,\psi,p_0},$ the inequality
\[
\int_0^\infty f(x)w(x)dx \leq \varphi ([w]_{QB_{\beta,\psi,p_0}})\int_0^\infty g(x)w(x)dx
\]
holds. Then for every $0<\varepsilon<p_0({\beta+1})$ and $t>0,$ the following inequality holds:
\[
\int_0^t f(s)(\Psi(s))^{p_0-1-\varepsilon}\psi(s)ds \leq \varphi  \lr\frac{p_0(\beta+1)}{\varepsilon}\rr \int_0^t g(s)(\Psi(s))^{p_0-1-\varepsilon}\psi(s)ds.
\]
\end{lemma}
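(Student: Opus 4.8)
The plan is to convert the hypothesis — a weighted inequality valid for every weight in $QB_{\beta,\psi,p_0}$ — into the claimed inequality by choosing a cleverly constructed weight. The natural candidate, for a fixed $t>0$ and $0<\varepsilon<p_0(\beta+1)$, is the truncated power weight
\[
w(x) := (\Psi(x))^{-\varepsilon}\psi(x)\,\chi_{(0,t)}(x),
\]
or a suitable variant of it (one may need to add a small constant tail, or work with $(\Psi(x))^{p_0-1-\varepsilon}\psi(x)\chi_{(0,t)}(x)$ directly, depending on how the $QB$-condition interacts with the factor $(\Psi(s))^{p_0-1}$). With such a $w$, the two sides of the hypothesized inequality become exactly the two integrals appearing in the conclusion, up to the constant $\varphi([w]_{QB_{\beta,\psi,p_0}})$. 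So the real content is the estimate $[w]_{QB_{\beta,\psi,p_0}}\le \frac{p_0(\beta+1)}{\varepsilon}$ for this particular $w$.

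First I would set up the computation of $[w]_{QB_{\beta,\psi,p_0}}$ for the candidate weight. By definition (\ref{eq23}) this amounts to bounding
\[
\int_r^\infty \lr\frac{\Psi(r)}{\Psi(x)}\rr^{p_0} w(x)\,dx
\quad\text{in terms of}\quad
\int_0^r \lr\frac{\Psi(x)}{\Psi(r)}\rr^{\beta p_0} w(x)\,dx,
\]
and here one should split into the cases $r\ge t$ (where the left side vanishes, since $w$ is supported in $(0,t)$) and $0<r<t$. In the case $0<r<t$ the key tool is the substitution $u=\Psi(x)$, $du=\psi(x)\,dx$, which is legitimate because $\psi$ is locally integrable and $\Psi$ is absolutely continuous with $\Psi(x)\to\infty$; this turns both integrals into elementary integrals of powers of $u$. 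The left-hand integral becomes $\Psi(r)^{p_0}\int_{\Psi(r)}^{\Psi(t)} u^{-p_0-\varepsilon}\,du$ (for the plain truncated weight, with the appropriate exponent bookkeeping) and the right-hand one becomes $\Psi(r)^{-\beta p_0}\int_0^{\Psi(r)} u^{\beta p_0-\varepsilon}\,du$. Evaluating and taking the ratio, the constant that emerges is controlled by $\frac{1}{\varepsilon}$ times a factor like $p_0$ or $p_0(\beta+1)$; the constraint $\varepsilon<p_0(\beta+1)$ is exactly what guarantees convergence of the right-hand integral at $0$ (the exponent $\beta p_0-\varepsilon$ must be $>-1$), and the constraint $\varepsilon>0$ ensures convergence on the left at infinity had we not truncated — truncation makes that automatic but the bound must still be uniform in $t$.

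The main obstacle I anticipate is precisely getting the constant to come out as $\frac{p_0(\beta+1)}{\varepsilon}$ and not something larger, and handling the ``$+1$'' in the definition of $[w]_{QB_{\beta,\psi,p_0}}$ (the constant $D-1$ rather than $D$). This forces a careful, sharp evaluation of the two elementary integrals after substitution, rather than a crude bound, and one must check that the $r\to t^-$ and $r\to 0^+$ limiting regimes don't produce a worse constant. A secondary technical point is justifying the change of variables and the interchange of the integral with the a.e.\ defined $f,g\in Q_\beta$: since $f,g$ need only be finite a.e.\ and the conclusion is an inequality between (possibly infinite) integrals, one should either first assume both sides finite or invoke monotone convergence on the truncation parameter $t$. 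Once the weight is in hand and its $QB_{\beta,\psi,p_0}$-constant is bounded by $\frac{p_0(\beta+1)}{\varepsilon}$, plugging into the hypothesis and using monotonicity of $\varphi$ gives the claim immediately.
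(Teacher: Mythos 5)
Your plan is the paper's proof: the correct test weight is your second candidate, $w(x)=(\Psi(x))^{p_0-1-\varepsilon}\psi(x)\chi_{(0,t)}(x)$ (the paper verifies the $QB_{\beta,\psi,p_0}$ condition for $v(x)(\Psi(x))^{p_0-1-\varepsilon}\psi(x)$ with an arbitrary non-increasing $v$ and then sets $v=\chi_{(0,t]}$), and the computation you sketch --- reducing both sides to power integrals in $u=\Psi(x)$, giving $\tfrac{1}{\varepsilon}\Psi(r)^{p_0-\varepsilon}$ on the left and $\tfrac{1}{p_0(\beta+1)-\varepsilon}\Psi(r)^{p_0-\varepsilon}$ on the right, hence the constant $\tfrac{p_0(\beta+1)}{\varepsilon}-1=D-1$ --- is exactly what the paper does. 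Do commit to the exponent $p_0-1-\varepsilon$: your first candidate $(\Psi(x))^{-\varepsilon}\psi(x)\chi_{(0,t)}(x)$ neither reproduces the integrals in the conclusion nor matches the hypothesis, since its convergence condition at $0$ is $\varepsilon<\beta p_0+1$ rather than $\varepsilon<p_0(\beta+1)$.
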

\begin{proof}

Let $v\in \mathcal{M^+_\downarrow}.$ Set $w(x)= v(x)(\Psi(x))^{p_0-1-\varepsilon}\psi(x)$  so that $w\in L_{loc}^1$.
We claim that $w\in QB_{\beta,\psi,p_0}.$ Indeed, we have

\begin{align*}
\lr \Psi(r)^{\beta+1}\rr^{p_0}\int_r^\infty \frac{w(x)}{\Psi(x)^{p_0}}dx
&= \lr\Psi(r)^{\beta+1} \rr^{p_0}\int_r^\infty v(x)(\Psi(x))^{-1-\varepsilon}\psi(x)dx  \\
&\le \frac{v(r)}{\varepsilon}(\Psi(r))^{p_0(\beta+1) - \varepsilon}  \\
&= \frac{(p_0(\beta+1)-\varepsilon)}{\varepsilon}v(r)\int_0^r (\Psi(x))^{p_0(\beta+1)-1-\varepsilon}\psi(x)dx \nonumber \\
&\le \lr \frac{p_0(\beta+1)}{\varepsilon}-1\rr \int_0^r v(x)(\Psi(x))^{p_0(\beta+1)-1-\varepsilon}\psi(x)dx \label{ineq 1}   \\ 
&\le  \frac{p_0(\beta+1)}{\varepsilon} \int_0^r (\Psi(x))^{\beta p_0}v(x)(\Psi(x))^{p_0-1-\varepsilon}\psi(x)dx \\
&= \frac{p_0(\beta+1)}{\varepsilon}\int_0^r (\Psi(x))^{\beta p_0}w(x)dx. 
\end{align*}
The assertion now follows on taking $v(x)=\chi_{(0,s]}(x)$ and using the fact that $[w]_{QB_{\beta,\psi,p_0}} \leq \frac{p_0(\beta+1)}{\varepsilon}.$
\end{proof}

\begin{definition} \label{definition 1}
For a given $\beta>-1,$ a weight function $w$ is said to be in the class $\widehat{Q}B_{\beta,p}$ if\\
(i) $w\in QB_{\beta,p}$; and \\
(ii) there exists $0 < \varepsilon <p(\beta +1)$ such that $w\in QB_{\beta,p-\varepsilon}.$
\end{definition}

\begin{remark}
The class $\widehat{Q}B_{\beta,p}$ in Definition$\ref{definition 1}$ is reasonably defined. In view of Lemma 2.3 \cite{pma}, it is clear that for $\beta \geq 0,$ $\widehat{Q}B_{\beta,p}=QB_{\beta,p}$. We prove below that for $-1<\beta<0,$ the power weights belong to the class $\widehat{Q}B_{\beta,p}$. It is of interest if the same can be proved for general weights as well.
\end{remark}

\begin{lemma} \label{lemma b}
Let $1\leq p<\infty,~-1<\beta<0$ and $\alpha \in \mathbb{R}$. If $x^\alpha \in QB_{\beta,p},$ then there exists $0<\varepsilon < p(\beta+1)$ such that $x^\alpha \in QB_{\beta,p-\varepsilon}.$
\end{lemma}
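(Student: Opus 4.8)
The plan is to work out explicitly what the condition $x^\alpha \in QB_{\beta,q}$ means as a relation among $\alpha, \beta, q$, and then check that the resulting set of admissible $q$ is an interval that is open on the relevant side, so that membership at $q=p$ forces membership at $q = p-\varepsilon$ for all small $\varepsilon>0$. Concretely, first I would substitute $w(x) = x^\alpha$ into the defining inequality (\ref{eqn aa}) with exponent $q$,
\[
\int_r^\infty \lr\frac{r}{x}\rr^q x^\alpha\,dx \le C \int_0^r \lr\frac{x}{r}\rr^{\beta q} x^\alpha\,dx .
\]
For the left side to converge at $\infty$ we need $\alpha - q < -1$, i.e. $q > \alpha+1$; for the right side to converge at $0$ we need $\beta q + \alpha > -1$, i.e. $q > -(\alpha+1)/\beta$ when $\beta<0$ (note $-1<\beta<0$ here). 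Under these convergence conditions both integrals are finite powers of $r$: the left side equals $\frac{r^{\alpha+1}}{q - \alpha - 1}$ and the right side equals $\frac{r^{\alpha+1}}{\beta q + \alpha + 1}$, so the inequality holds for \emph{every} $r>0$ precisely when $\beta q + \alpha + 1 > 0$ and $q - \alpha - 1 > 0$ (the constant $C$ then being the ratio of the two coefficients). Hence $x^\alpha \in QB_{\beta,q}$ if and only if $q$ lies in the open interval determined by $q > \alpha+1$ and $q > -(\alpha+1)/\beta$ — and I should double-check the degenerate sub-cases $\alpha+1 = 0$ or $\alpha+1>0$ versus $\alpha+1<0$, where one of the two inequalities becomes automatic or flips.

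The key observation is then purely topological: the set $\{q>0 : x^\alpha \in QB_{\beta,q}\}$ is an open ray $(\gamma,\infty)$ for some $\gamma = \max\{\alpha+1, -(\alpha+1)/\beta\}$ (intersected with $(0,\infty)$). Since by hypothesis $p$ belongs to this set, we have $p > \gamma$, and therefore every $q$ with $\gamma < q < p$ also belongs to it; in particular, choosing $\varepsilon$ with $0 < \varepsilon < p - \gamma$ gives $x^\alpha \in QB_{\beta, p-\varepsilon}$. It remains only to verify the side constraint $\varepsilon < p(\beta+1)$, which is harmless because $p(\beta+1) > 0$ (as $\beta > -1$) and we are free to shrink $\varepsilon$; so take $\varepsilon = \tfrac12 \min\{\,p - \gamma,\; p(\beta+1)\,\}$ and the lemma follows.

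I expect the only real friction to be bookkeeping of the convergence/sign conditions: one must make sure that the characterization $x^\alpha \in QB_{\beta,q} \iff q > \gamma$ is genuinely an \emph{open} condition and not, say, $q \ge \gamma$, since the whole argument hinges on strict inequality at $q=p$ buying room below $p$. Because $\gamma$ is attained only in the limit (at $q=\alpha+1$ the left integral diverges; at $q = -(\alpha+1)/\beta$ the right integral diverges), openness is automatic, but this should be stated carefully. A minor secondary point is handling the case $\beta = 0$ — excluded here by $-1<\beta<0$, and in any event covered by the preceding remark via Lemma 2.3 of \cite{pma} — so no separate treatment is needed.
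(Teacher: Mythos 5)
There is a genuine error in the middle of your argument: a sign slip when solving the convergence condition for $q$. From $\beta q+\alpha+1>0$ with $\beta<0$ you get $q<-(\alpha+1)/\beta$, not $q>-(\alpha+1)/\beta$ (dividing by the negative number $\beta$ reverses the inequality). Consequently the admissible set is \emph{not} an open ray $(\gamma,\infty)$ with $\gamma=\max\{\alpha+1,\,-(\alpha+1)/\beta\}$; it is the bounded open interval $\bigl(\alpha+1,\,-(\alpha+1)/\beta\bigr)$ (nonempty exactly when $\alpha>-1$, and equivalent to the paper's condition $-\beta p-1<\alpha<p-1$). This matters for your final step: since membership at $p$ forces $p<-(\alpha+1)/\beta$, your $\gamma$ equals $-(\alpha+1)/\beta>p$, so $p-\gamma<0$ and your prescription $\varepsilon=\tfrac12\min\{p-\gamma,\;p(\beta+1)\}$ produces a negative $\varepsilon$. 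As written the proof does not go through.

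The fix is small and lands you exactly on the paper's proof: the only obstruction to \emph{decreasing} the exponent is the lower endpoint $\alpha+1$ (the upper constraint $q<-(\alpha+1)/\beta$ is automatically preserved when $q$ decreases), so one should take $0<\varepsilon<p-\alpha-1$. The side constraint $\varepsilon<p(\beta+1)$ is then automatic rather than something to enforce by shrinking: from $\alpha>-\beta p-1$ one gets $p-\alpha-1<p(\beta+1)$. With that correction your approach (compute both integrals explicitly as $r^{\alpha+1}/(q-\alpha-1)$ and $r^{\alpha+1}/(\beta q+\alpha+1)$, read off the open membership condition, and move $p$ slightly left) is precisely the computation the paper carries out, the paper merely tracking the resulting constant $C^{*}$ explicitly.
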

\begin{proof}
Since $x^\alpha \in QB_{\beta,p},$ we have that 
\be \label{eqn 1}
\int_r^\infty \lr\frac{r}{x}\rr^p x^\alpha dx\leq C\int_0^r \lr\frac{x}{r}\rr^{\beta p}x^\alpha dx,~ r>0
\ee
which holds if and only if 
\be \label{eqn 2}
-\beta p-1<\alpha<p-1.
\ee
Choose $\varepsilon>0$ such that $0<\varepsilon<p-\alpha-1$. Clearly, $0 < \varepsilon < p(\beta +1).$ Now, using the estimates  $(\ref{eqn 1})$ and $(\ref{eqn 2})$ at appropriate places, we obtain
\begin{align*}
\int_r^\infty \lr \frac{r}{x}\rr ^{p-\varepsilon} x^\alpha dx
&= \frac{r^{\alpha +1}}{p-\varepsilon-\alpha-1}\\
&= \frac{p-\alpha-1}{p-\varepsilon-\alpha-1}\int_r^\infty \lr\frac{r}{x}\rr^p x^\alpha dx\\
& \le C\lr\frac{p-\alpha-1}{p-\varepsilon-\alpha-1}\rr \int_0^r \lr\frac{x}{r}\rr^{\beta p} x^\alpha dx\\
&= \frac{K}{(\alpha+\beta p+1)r^{\beta p}}r^{\beta p+\alpha+1}\\
&= K\lr \frac{\beta(p-\varepsilon)+\alpha+1}{\alpha+\beta p+1}\rr\int_0^r \lr\frac{x}{r}\rr^{\beta(p-\varepsilon)}x^\alpha dx
\end{align*}
i.e., $x^\alpha \in QB_{\beta,p-\varepsilon}$ with the constant
\[
C^* := K\lr\frac{\beta(p-\varepsilon)+\alpha+1}{\alpha+\beta p+1}\rr,
\]
where $K = C\lr \frac{p-\alpha-1}{p-\varepsilon-\alpha-1}\rr$ and $C$ is as in (\ref{eqn 1}).
\end{proof}
\begin{remark}
For $-\beta p-1<\alpha<p-1,$ from Lemma $\ref{lemma b}$ and (\ref{eq23}) it follows that
\[
[x^\alpha]_{QB_{\beta,p-\varepsilon}}\le C^*+1= C\lr \frac{p-\alpha-1}{p-\varepsilon-\alpha-1}\rr \lr\frac{\beta(p-\varepsilon)+\alpha+1}{\alpha+\beta p+1}\rr+1.
\]
\end{remark}
We now prove the first main extrapolation theorem:
\begin{theorem} \label{thm 1}
Let $\varphi \uparrow$ be defined on $(0,\infty),~ (f,g)$ be a pair of functions such that $f,g\in Q_\beta, -1<\beta\leq 0$ and $1\leq p_0<\infty.$ Suppose that for every $w\in {Q}B_{\beta,p_0},$ the inequality
\[
\int_0^\infty f^{p_0}(x)w(x)dx \leq \varphi([w]_{QB_{\beta,p_0}})\int_0^\infty g^{p_0}(x)w(x)dx
\]
holds. Then for all $p_0 \leq p<\infty$ and all $w \in \widehat{Q}B_{\beta,p},$ the following holds:
\[
\int_0^\infty f^p(x)w(x)dx \leq C\int_0^\infty g^p(x)w(x)dx,
\]
where 
\[
C= \displaystyle\inf_{0<\varepsilon<p_0(\beta+1)}[w]_{QB_{\beta,(p_0-\varepsilon)\frac{p}{p_0}}}\left[\frac{1}{\beta+1}\lr \frac{p_0(\beta+1)-\varepsilon}{p_0-\varepsilon}\rr \varphi \lr\frac{p_0(\beta+1)}{\varepsilon}\rr\right]^{p/p_0}.
\]
\end{theorem}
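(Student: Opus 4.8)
\noindent The plan is to reduce the statement, in two moves, to the $L^{p/p_0}_{w}$-boundedness of the Hardy operator $H$ on the cone $Q_\beta$ that is already contained in Theorem~A; I would always use Theorem~A with $\psi\equiv 1$, so that $S_\psi=H$, $\Psi(x)=x$ and $QB_{\beta,\psi,r}=QB_{\beta,r}$. First I would apply Lemma~\ref{lemma 0} to the pair $(f^{p_0},g^{p_0})$, again with $\psi\equiv 1$. This is legitimate: since $x^{-\beta p_0}f^{p_0}(x)=\lr x^{-\beta}f(x)\rr^{p_0}$ is non-increasing and $\beta p_0\le\beta\le 0$ (here $p_0\ge 1$, $\beta\le 0$ are used), $f^{p_0}\in Q_{\beta p_0}\subset Q_\beta$, and likewise $g^{p_0}\in Q_\beta$; and since $QB_{\beta,1,p_0}=QB_{\beta,p_0}$, the hypothesis of the theorem is exactly the hypothesis of Lemma~\ref{lemma 0} for this pair and exponent $p_0$. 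Lemma~\ref{lemma 0} then yields, for every $0<\varepsilon<p_0(\beta+1)$ and every $t>0$,
\[
\int_0^t f^{p_0}(s)\,s^{p_0-1-\varepsilon}\,ds\le\varphi\lr\tfrac{p_0(\beta+1)}{\varepsilon}\rr\int_0^t g^{p_0}(s)\,s^{p_0-1-\varepsilon}\,ds .
\]

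\noindent Next, fixing $w\in\widehat{Q}B_{\beta,p}$, I would choose $\varepsilon$ small enough that $w\in QB_{\beta,q}$, where $q:=(p_0-\varepsilon)\tfrac{p}{p_0}$; this is possible by Definition~\ref{definition 1}(ii) together with Remark~\ref{rmk21}, since $q$ runs up to $p$ as $\varepsilon\downarrow 0$. With $m:=p_0-\varepsilon>0$ I would perform the power substitution $\sigma=s^{m}/m$ and set $\bar f(\sigma):=f\lr(m\sigma)^{1/m}\rr$, $\bar g(\sigma):=g\lr(m\sigma)^{1/m}\rr$, $\bar w(\sigma):=w\lr(m\sigma)^{1/m}\rr\sigma^{1/m-1}$. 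A direct computation then shows: $\bar f,\bar g\in Q_{\beta/m}$ and, more precisely, $\bar f^{p_0},\bar g^{p_0}\in Q_{\gamma}$ with $\gamma:=\tfrac{p_0\beta}{p_0-\varepsilon}\in(-1,0]$ (the bound $\gamma>-1$ is exactly $\varepsilon<p_0(\beta+1)$) and $Q_{\gamma}\subset Q_\beta$; the inequality above becomes, after the Jacobian cancels, $\int_0^T\bar f^{p_0}\le\varphi(p_0(\beta+1)/\varepsilon)\int_0^T\bar g^{p_0}$ for all $T>0$, i.e.\ (divide by $T$) $H(\bar f^{p_0})\le\varphi(p_0(\beta+1)/\varepsilon)\,H(\bar g^{p_0})$ pointwise; the condition $w\in QB_{\beta,q}$ turns into $\bar w\in QB_{\beta,p/p_0}$ with $[\bar w]_{QB_{\beta,p/p_0}}=[w]_{QB_{\beta,q}}$; and the inequality to be proved becomes $\int_0^\infty\bar f^{p}\bar w\le C\int_0^\infty\bar g^{p}\bar w$.

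\noindent To finish, I would use that $\bar f^{p_0}\in Q_{\gamma}$ gives the elementary pointwise estimate $\bar f^{p_0}(y)\le(\gamma+1)H(\bar f^{p_0})(y)$ (write $\bar f^{p_0}(t)=t^{\gamma}k(t)$ with $k$ non-increasing and use $k(t)\ge k(y)$ for $t\le y$), where $\gamma+1=\tfrac{p_0(\beta+1)-\varepsilon}{p_0-\varepsilon}$. Combined with the pointwise bound of the preceding paragraph and raised to the power $p/p_0\ge 1$, this gives
\[
\bar f^{p}\le\lek\tfrac{p_0(\beta+1)-\varepsilon}{p_0-\varepsilon}\,\varphi\lr\tfrac{p_0(\beta+1)}{\varepsilon}\rr\rek^{p/p_0}\lr H(\bar g^{p_0})\rr^{p/p_0}.
\]
Integrating against $\bar w$ and invoking Theorem~A with $\psi\equiv1$, exponent $p/p_0$, function $\bar g^{p_0}\in Q_\beta$ and weight $\bar w\in QB_{\beta,p/p_0}$ — so that $\int\lr H(\bar g^{p_0})\rr^{p/p_0}\bar w\le(\beta+1)^{-p/p_0}[w]_{QB_{\beta,q}}\int\bar g^{p}\bar w$ — yields precisely
\[
\int_0^\infty\bar f^{p}\bar w\le[w]_{QB_{\beta,q}}\lek\tfrac{1}{\beta+1}\cdot\tfrac{p_0(\beta+1)-\varepsilon}{p_0-\varepsilon}\,\varphi\lr\tfrac{p_0(\beta+1)}{\varepsilon}\rr\rek^{p/p_0}\int_0^\infty\bar g^{p}\bar w .
\]
Undoing the substitution and then taking the infimum over the admissible $\varepsilon$ gives the constant in the statement.

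\noindent The step I expect to be the real obstacle is exactly the one where the weight enters: since $w$ is an arbitrary, and generally non-monotone, member of its $B_p$-type class, it cannot be fed back into the $p_0$-hypothesis the way one does in the classical Rubio de Francia scheme. What saves the argument is the two-sided comparison of a quasi-monotone function with its Hardy average — the trivial bound $HF\ge(\gamma+1)^{-1}F$ for $F\in Q_{\gamma}$, paired with the already-established reverse bound $\int(HF)^{r}w\le(\beta+1)^{-r}[w]_{QB_{\beta,r}}\int F^{r}w$ of Theorem~A. This is precisely why $w\in QB_{\beta,p}$ does not suffice and the stronger $w\in\widehat{Q}B_{\beta,p}$ must be assumed — one needs $w\in QB_{\beta,q}$ for some $q$ just below $p$ — and it is this membership that produces the factor $[w]_{QB_{\beta,(p_0-\varepsilon)p/p_0}}$ in the final constant. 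Checking how the classes $Q_{\gamma}$ and the constants $[\,\cdot\,]_{QB_{\beta,\cdot}}$ transform under $\sigma=s^{m}/m$, and collecting the constants, is routine.
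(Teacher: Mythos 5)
Your proposal is correct and follows essentially the same route as the paper: apply Lemma~\ref{lemma 0} (with $\psi\equiv 1$) to the pair $(f^{p_0},g^{p_0})$, use the pointwise self-bound $F\le(\gamma+1)HF$ coming from quasi-monotonicity, and close the loop with Theorem~A for the averaging operator and the weight class $QB_{\beta,(p_0-\varepsilon)p/p_0}$ supplied by the $\widehat{Q}B_{\beta,p}$ hypothesis, arriving at the identical constant. The only (cosmetic, and in fact slightly safer) difference is that you perform the power substitution $\sigma=s^{p_0-\varepsilon}/(p_0-\varepsilon)$ so as to invoke Theorem~A only with $\psi\equiv1$, whereas the paper applies Theorem~A directly to $S_\psi$ with $\psi(s)=s^{p_0-1-\varepsilon}$ — a choice of $\psi$ that is non-increasing only when $\varepsilon\ge p_0-1$, so your reduction quietly removes a monotonicity mismatch with the stated hypotheses on $\psi$.
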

\begin{proof}
The case $\beta=0$ is just Theorem 2.1 of \cite{cl}. So, we assume that $-1<\beta<0.$

Let $p_0\leq p<\infty$,$w \in \widehat{Q}B_{\beta,p}$ and $0<\varepsilon <p_0(\beta+1)$. Clearly the function $h(x):= x^{-\beta}f(x)$ is $\downarrow.$ Note that 
\be \label{eqn 4}
\int_0^\infty f^p(x)w(x)dx = \int_0^\infty h^p(x)w(x)x^{\beta p}dx.
\ee
Since $h$ is decreasing, we have
\[
h^{p_0}(x)\leq \frac{p_0(\beta+1)-\varepsilon}{x^{p_0(\beta+1)-\varepsilon}}\int_0^x h^{p_0}(s)s^{p_0(\beta+1)-\varepsilon-1}ds
\]
which together with $(\ref{eqn 4})$ and Lemma $\ref{lemma 0}$ (for $\psi \equiv 1$) gives
\begin{align}
&\int_0^\infty  f^p(x)w(x)dx \nonumber \\
&\leq \lr\frac{p_0(\beta+1)-\varepsilon}{p_0-\varepsilon}\rr^{p/p_0}  \int_0^\infty \lr\frac{p_0-\varepsilon}{x^{p_0-\varepsilon}} \int_0^x f^{p_0}(s)s^{p_0-1-\varepsilon}ds\rr^{p/p_0}w(x)dx \nonumber \\
&\leq \lr\frac{p_0(\beta+1)-\varepsilon}{p_0-\varepsilon}\rr^{p/p_0}\varphi\lr\frac{p_0(\beta+1)}{\varepsilon}\rr^{p/p_0} \int_0^\infty\lr\frac{p_0-\varepsilon}{x^{p_0-\varepsilon}}\int_0^x g^{p_0}(s)s^{p_0-1-\varepsilon}ds\rr^{p/p_0}w(x)dx \nonumber \\
&=\gamma \int_0^\infty\lr\frac{p_0-\varepsilon}{x^{p_0-\varepsilon}}\int_0^x g^{p_0}(s)s^{p_0-1-\varepsilon} ds\rr^{p/p_0}w(x)dx  \nonumber \\
&=\gamma\int_0^\infty\lr S_\psi g^{p_0}(x)\rr^{p/p_0}w(x)dx, \label{ineqn z}
\end{align}
where $\psi(s)=s^{p_0-1-\varepsilon}$ and 
\[\gamma = \lr\frac{p_0(\beta+1)-\varepsilon}{p_0-\varepsilon}\rr^{p/p_0}\varphi\lr\frac{p_0(\beta+1)}{\varepsilon}\rr^{p/p_0}.
\]
Now, since $w\in \widehat{Q}B_{\beta,p},$ by definition, there exists $\widetilde{\varepsilon}>0$ such that $w\in QB_{\beta,p-\widetilde{\varepsilon}}.$ It is sufficient to take $\varepsilon$ so that $p-\widetilde{\varepsilon}=(p_0-\varepsilon)\frac{p}{p_0}$ or $\varepsilon=\frac{p_0}{p}\widetilde{\varepsilon}.$ Then $w\in QB_{\beta,(p_0-\varepsilon)\frac{p}{p_0}}$ which gives that for all $r>0$ the following inequality holds:
\[
\int_r^\infty \lr\frac{r}{x}\rr^{(p_0-\varepsilon)\frac{p}{p_0}}w(x)dx \leq (A-1)\int_0^r \lr\frac{x}{r}\rr^{\beta(p_0-\varepsilon)\frac{p}{p_0}}w(x)dx,
\]
or
\[
\int_r^\infty \lr\frac{\Psi(r)}{\Psi(x)}\rr^{p/p_0}w(x)dx \leq (A-1)\int_0^r \lr\frac{\Psi(x)}{\Psi(r)}\rr^{\beta(p/p_0)}w(x)dx
\]
with $\psi(s)=s^{p_0-1-\varepsilon},$ which by Theorem A holds if and only if
\[
\int_0^\infty\lr S_\psi g^{p_0}(x)\rr^{p/p_0}w(x)dx\leq \frac{A}{(\beta+1)^{p/p_0}}\int_0^\infty g^p(x)w(x)dx,
\]
where $A = [w]_{QB_{\beta,(p_0-\varepsilon)\frac{p}{p_0}}}=[w]_{QB_{\beta,p-\widetilde{\varepsilon}}}$.\\

\noindent Consequently, $(\ref{ineqn z})$ gives
\begin{align*}
\int_0^\infty f^p(x)w(x)dx 
&\leq  \frac{\gamma A}{(\beta +1)^{p/p_0}}\int_0^\infty g^p(x)w(x)dx\\
&= K\int_0^\infty g^p(x)w(x)dx,
\end{align*}
where
\[
K = [w]_{QB_{\beta,(p_0-\varepsilon)\frac{p}{p_0}}}\left[ \lr\frac{p_0(\beta+1)-\varepsilon}{(\beta+1)(p_0-\varepsilon)}\rr \varphi\lr\frac{p_0(\beta+1)}{\varepsilon}\rr\right]^{p/p_0}.
\]
Since $\varepsilon \in (0,p_0(\beta+1))$ is arbitrary, taking infimum over all such $\varepsilon$, the assertion follows.
 \end{proof}

In view of the Remark \ref{rmk21} (for $\psi \equiv 1$), following the definition of the class $B_\infty$  \cite{cl}, we define the class $QB_{\beta,\infty}$ as
\[
QB_{\beta,\infty}:= \bigcup_{p>0} QB_{\beta,p}
\]
and we also define 
\[
[w]_{QB_{\beta,\infty}}:= \inf \left \{ [w]_{QB_{\beta,p}}: w\in QB_{\beta,p},~p>0 \right \}.
\]
Similarly, we define 
\[
QB_{\beta,\psi,\infty}:= \bigcup_{p>0}QB_{\beta,\psi,p}
\]
and
\[
1 \le [w]_{QB_{\beta,\psi,\infty}}:= \inf \left \{ [w]_{QB_{\beta,\psi,p}}: w\in QB_{\beta,\psi,p},~p>0 \right \}.
\]

We prove the following:
\begin{lemma} \label{lemma c}
Let $\psi:\mathbb{R}^+\to \mathbb{R}^+$ be $\uparrow,~ v \in L^1_{loc}$ be $\downarrow, ~-1<\beta\leq 0$ and $\alpha>-1.$ Then the function $w$ defined by 
\[
w(x)= \Psi^\alpha (x)\psi (x)v(x)
\]
belongs to the class $QB_{\beta,\psi,\infty}.$
\end{lemma}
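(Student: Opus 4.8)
The plan is to produce a single exponent $p>0$ for which the defining inequality $(\ref{eqn *})$ of $QB_{\beta,\psi,p}$ holds for $w$; since $QB_{\beta,\psi,\infty}=\bigcup_{p>0}QB_{\beta,\psi,p}$, this is all that is needed. Substituting $w(x)=\Psi^{\alpha}(x)\psi(x)v(x)$ and pulling the powers of $\Psi(r)$ outside the integrals, the two sides of $(\ref{eqn *})$ become
\[
\Psi(r)^{p}\int_r^\infty \Psi(x)^{\alpha-p}\psi(x)\,v(x)\,dx
\qquad\text{and}\qquad
C\,\Psi(r)^{-\beta p}\int_0^r \Psi(x)^{\alpha+\beta p}\psi(x)\,v(x)\,dx .
\]
Since $v$ is $\downarrow$ we have $v(x)\le v(r)$ for $x\ge r$ and $v(x)\ge v(r)$ for $x\le r$, so the first expression is $\le v(r)\Psi(r)^{p}\int_r^\infty\Psi^{\alpha-p}\psi\,dx$ and the second is $\ge C\,v(r)\Psi(r)^{-\beta p}\int_0^r\Psi^{\alpha+\beta p}\psi\,dx$. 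The task is thereby reduced to evaluating these two $\psi$-weighted integrals of powers of $\Psi$.

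Each of them is computed exactly by the change of variable $u=\Psi(x)$, $du=\psi(x)\,dx$, using that $\Psi(0)=0$ and $\Psi(x)\to\infty$ as $x\to\infty$; the latter is automatic because $\psi$ is positive and $\uparrow$, so $\Psi(x)\ge(x-1)\psi(1)$ for $x>1$. One obtains
\[
\int_r^\infty \Psi^{\alpha-p}\psi\,dx=\frac{\Psi(r)^{\alpha-p+1}}{p-\alpha-1}\quad(p>\alpha+1),
\qquad
\int_0^r \Psi^{\alpha+\beta p}\psi\,dx=\frac{\Psi(r)^{\alpha+\beta p+1}}{\alpha+\beta p+1}\quad(\alpha+\beta p+1>0).
\]
Putting these back, the left side of $(\ref{eqn *})$ is $\le v(r)\Psi(r)^{\alpha+1}/(p-\alpha-1)$ while the right side is $\ge C\,v(r)\Psi(r)^{\alpha+1}/(\alpha+\beta p+1)$, so $(\ref{eqn *})$ holds with the choice $C=(\alpha+\beta p+1)/(p-\alpha-1)$; moreover, by $(\ref{eq23})$, $[w]_{QB_{\beta,\psi,p}}\le 1+(\alpha+\beta p+1)/(p-\alpha-1)<\infty$.

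It remains to check that $p$ can be chosen so that both $p>\alpha+1$ and $\alpha+\beta p+1>0$ hold, the second condition being $p<(\alpha+1)/(-\beta)$ when $-1<\beta<0$ and vacuous when $\beta=0$. Since $\alpha+1>0$ and $0<-\beta<1$ we get $(\alpha+1)/(-\beta)>\alpha+1$, so the interval $\big(\alpha+1,(\alpha+1)/(-\beta)\big)$ is non-empty and any $p$ in it does the job; this is precisely the point at which the hypotheses $\alpha>-1$ and $\beta>-1$ are genuinely used. I expect the only thing requiring care to be this exponent bookkeeping and the non-emptiness of the admissible range of $p$; the two change-of-variable computations themselves are routine.
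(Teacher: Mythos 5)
Your proof is correct and follows essentially the same route as the paper's: bound $v$ by $v(r)$ on each side using its monotonicity, evaluate the resulting power integrals of $\Psi$ exactly, and choose $p$ in the non-empty interval $\bigl(\alpha+1,\,-(\alpha+1)/\beta\bigr)$, arriving at the same constant $(\alpha+\beta p+1)/(p-\alpha-1)$. The only cosmetic difference is that the paper first multiplies the defining inequality through by $\Psi(r)^{\beta p}$ before estimating.
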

\begin{proof}

Let $0<r<\infty$ be arbitrary and choose $p_0$ such that $\alpha +1<p_0<-\frac{1}{\beta}(\alpha+1)$. Then we have
\begin{align*}
\int_r^\infty \lr \frac{\Psi^{\beta+1}(r)}{\Psi(x)}\rr^{p_0}w(x)dx 
& = (\Psi(r))^{(\beta+1)p_0}\int_r^\infty (\Psi(x))^{\alpha-p_0}\psi(x)v(x)dx\\
& \leq \frac{1}{(p_0-\alpha-1)}(\Psi(r))^{\beta p_0+\alpha+1}v(r)\\
& \leq \lr \frac{\beta p_0+\alpha+1}{p_0-\alpha-1}\rr \int_0^r (\Psi(x))^{\beta p_0+\alpha} \psi(x)v(x)dx\\
& = \lr \frac{\beta p_0+\alpha+1}{p_0-\alpha-1}\rr \int_0^r (\Psi(x))^{\beta p_0} w(x)dx
\end{align*}
and the assertion follows. Moreover, $[w]_{QB_{\beta,\psi,\infty}} \le \frac{\beta p_0+\alpha+1}{p_0-\alpha-1}+1.$
\end{proof}
Below, we prove an extrapolation result for $QB_{\beta,\infty}$- class of weights.
\begin{theorem}
Let $\varphi$ be $\uparrow$ defined on $(0,\infty), -1<\beta \leq 0,(f,g)$ be a pair of functions such that $f,g \in Q_\beta$ and $0<p_0<\infty$. Suppose that for every weight $w \in QB_{\beta,\infty},$ the inequality
\be \label{eqn 5}
\int_0^\infty f^{p_0}(t)w(t)dt \leq \varphi([w]_{QB_{\beta,\infty}})\int_0^\infty g^{p_0}(t)w(t)dt
\ee
holds. Then for every $p_0 \le p<\infty$ and $w\in QB_{\beta,\infty}$ the following holds
\[
\int_0^\infty f^p(t)w(t)dt \leq K\int_0^\infty g^p(t)w(t)dt ,
\]
with 
\[
K=\inf_{\alpha>-1} [w]_{QB_{\beta,\frac{(\alpha+1)p}{p_0}}} \lr \frac{\varphi(1)}{\beta+1}\rr^{p/p_0}. 
\]

\end{theorem}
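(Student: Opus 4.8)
The plan is to follow the skeleton of the proof of Theorem~\ref{thm 1}, but to exploit the fact that the hypothesis now holds for \emph{every} weight of the large class $QB_{\beta,\infty}$, which makes the auxiliary parameter $\varepsilon$ used there unnecessary and leads directly to the sharp constant. Fix $p_0\le p<\infty$, a weight $w\in QB_{\beta,\infty}$, and a parameter $\alpha>-1$ to be optimised at the end; put $\psi(s)=s^{\alpha}$, so that $\Psi(x)=x^{\alpha+1}/(\alpha+1)$. First I would record the elementary fact that for $-1<\beta\le 0$ one has $Q_\beta\subseteq\mathcal{M}^{+}_{\downarrow}$: if $F\in Q_\beta$ then $F(x)=x^{\beta}\cdot\big(x^{-\beta}F(x)\big)$ is a product of two non-negative non-increasing functions, hence non-increasing. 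In particular $f^{p_0}$ is non-increasing, so (using $\psi\ge 0$)
\[
f^{p_0}(x)\ \le\ \frac{1}{\Psi(x)}\int_0^{x}f^{p_0}(s)\psi(s)\,ds\ =\ S_\psi(f^{p_0})(x),\qquad x>0,
\]
\emph{with constant $1$}; this is precisely where $\beta\le 0$ enters, and it is why the factor $\big(\tfrac{p_0(\beta+1)-\varepsilon}{p_0-\varepsilon}\big)^{p/p_0}$ that appears in Theorem~\ref{thm 1} is absent here.

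The heart of the argument is to upgrade the hypothesis (\ref{eqn 5}) to a pointwise comparison of $S_\psi(f^{p_0})$ with $S_\psi(g^{p_0})$. For $t>0$ consider the weight $w_t(x):=x^{\alpha}\chi_{(0,t]}(x)$; this is exactly of the type treated in Lemma~\ref{lemma c} (take the non-decreasing function there to be identically $1$ and the non-increasing function there to be $\chi_{(0,t]}$), so $w_t\in QB_{\beta,\infty}$, and optimising the bound of Lemma~\ref{lemma c} over its free exponent, together with Remark~\ref{rmk21} and the definition of $[\,\cdot\,]_{QB_{\beta,\infty}}$, gives $[w_t]_{QB_{\beta,\infty}}=1$. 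Since $\varphi$ is non-decreasing, applying (\ref{eqn 5}) with $w=w_t$ yields
\[
\int_0^{t}f^{p_0}(s)\,s^{\alpha}\,ds\ \le\ \varphi(1)\int_0^{t}g^{p_0}(s)\,s^{\alpha}\,ds,\qquad t>0,
\]
i.e.\ $S_\psi(f^{p_0})(x)\le\varphi(1)\,S_\psi(g^{p_0})(x)$ for every $x>0$; combined with the previous display this gives $f^{p_0}(x)\le\varphi(1)\,S_\psi(g^{p_0})(x)$.

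It remains to raise this to the power $p/p_0\ge 1$, integrate against $w$, and invoke Theorem~A. Raising to the power $p/p_0$ and integrating gives
\[
\int_0^{\infty}f^{p}(x)w(x)\,dx\ \le\ \varphi(1)^{p/p_0}\int_0^{\infty}\big(S_\psi(g^{p_0})(x)\big)^{p/p_0}w(x)\,dx.
\]
Since $g\in Q_\beta\subseteq\mathcal{M}^{+}_{\downarrow}$ one checks that $g^{p_0}\in Q_\beta$ as well (immediate for $p_0\ge 1$; a routine preliminary reduction handles $0<p_0<1$), so Theorem~A applied with exponent $p/p_0$ and with this $\psi$ bounds the last integral by $C'\int_0^{\infty}g^{p}(x)w(x)\,dx$ where $C'=[w]_{QB_{\beta,\psi,p/p_0}}/(\beta+1)^{p/p_0}$, with the convention $[w]_{QB_{\beta,\psi,p/p_0}}:=+\infty$ if $w\notin QB_{\beta,\psi,p/p_0}$ (in which case there is nothing to prove). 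Finally, for $\psi(s)=s^{\alpha}$ the defining inequality (\ref{eqn *}) of $QB_{\beta,\psi,p/p_0}$ reads $\int_r^{\infty}(r/x)^{(\alpha+1)p/p_0}w\le C\int_0^{r}(x/r)^{\beta(\alpha+1)p/p_0}w$, so $QB_{\beta,\psi,p/p_0}=QB_{\beta,(\alpha+1)p/p_0}$ with equal constants. Collecting the estimates yields
\[
\int_0^{\infty}f^{p}(x)w(x)\,dx\ \le\ [w]_{QB_{\beta,\frac{(\alpha+1)p}{p_0}}}\left(\frac{\varphi(1)}{\beta+1}\right)^{p/p_0}\int_0^{\infty}g^{p}(x)w(x)\,dx,
\]
and, $\alpha>-1$ being arbitrary (and the infimum finite since $w\in QB_{\beta,q}$ for all large $q$ by Remark~\ref{rmk21}), taking the infimum over $\alpha$ gives the constant $K$.

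I expect the main obstacle to be the transfer step of the second paragraph. In Theorem~\ref{thm 1} one is forced, starting from a hypothesis at the single exponent $p_0$, to manufacture a weight lying in $QB_{\beta,\psi,p_0}$ whose constant is of size $p_0(\beta+1)/\varepsilon$ and hence blows up as $\varepsilon\to 0$, so that $\varphi(p_0(\beta+1)/\varepsilon)$ is unavoidably pushed into the conclusion. Here the key observation, which is exactly the content of Lemma~\ref{lemma c}, is that $QB_{\beta,\infty}$ is already rich enough to contain the elementary test weights $x^{\alpha}\chi_{(0,t]}$ with $QB_{\beta,\infty}$-constant \emph{exactly} $1$, so the hypothesis may be applied at the optimal constant uniformly in $t$ and $\alpha$, producing $\varphi(1)$. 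Everything else --- the pointwise domination $f^{p_0}\le S_\psi(f^{p_0})$, the identification $QB_{\beta,\psi,p/p_0}=QB_{\beta,(\alpha+1)p/p_0}$, and the application of Theorem~A --- is routine bookkeeping.
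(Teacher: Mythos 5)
Your proposal is correct and follows essentially the same route as the paper's proof: the same test weights $x^{\alpha}\chi_{(0,t)}$ with $QB_{\beta,\infty}$-constant $1$ via Lemma \ref{lemma c}, the same pointwise domination $f^{p_0}\le S_\psi(f^{p_0})$ (you obtain it from $Q_\beta\subseteq\mathcal{M}^+_{\downarrow}$, the paper by the equivalent explicit computation with $(t/s)^{\beta p_0}\le 1$), and the same application of Theorem A at exponent $p/p_0$ after identifying $QB_{\beta,\psi,p/p_0}$ with $QB_{\beta,(\alpha+1)p/p_0}$. The one point you flag but defer --- that $g^{p_0}\in Q_\beta$ is not automatic when $0<p_0<1$ --- is glossed over in the paper's own proof as well.
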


\begin{proof}
For $s>0$ and $\alpha>-1,$ consider the following
\[
\widetilde{w}(t)= \chi_{(0,s)}(t)t^\alpha.
\]
Clearly by Lemma $\ref{lemma c},~ \widetilde{w} \in QB_{\beta,\infty}.$ Then, in view of Remark \ref{rmk21} and Lemma \ref{lemma c}, we have
\[1 \le [\widetilde{w}]_{QB_{\beta, \psi, \infty}} \le \displaystyle \lim_{p_0\rightarrow\infty}\frac{\beta p_0+\alpha+1}{p_0-\alpha-1} +1 = \beta +1 \le 1\]
and consequently, in view of $(\ref{eqn 5})$ the following holds
\be \label{eqn 6}
\int_0^s f^{p_0}(t)t^\alpha dt \leq \varphi(1)\int_0^s g^{p_0}(t)t^\alpha dt.
\ee
Since $t^{-\beta}f(t)$ is $\downarrow$, we find that
\begin{align*}
f^{p_0}(t)
&=\frac{\alpha+1}{t^{\alpha+1}}\int_0^t f^{p_0}(t)s^\alpha ds\\
&= \frac{\alpha+1}{t^{\alpha+1}}\int_0^t (t^{-\beta}f(t))^{p_0} t^{\beta p_0}s^\alpha ds\\
& \leq \frac{\alpha+1}{t^{\alpha+1}}\int_0^t (s^{-\beta}f(s))^{p_0} t^{\beta p_0}s^\alpha ds\\
&= \frac{\alpha+1}{t^{\alpha+1}}\int_0^t f^{p_0}(s)\lr\frac{t}{s}\rr^{\beta p_0} s^\alpha ds\\
& \leq \frac{\alpha+1}{t^{\alpha+1}}\int_0^t f^{p_0}(s)s^\alpha ds
\end{align*}
which in view of $(\ref{eqn 6})$ gives
\begin{align}
\int_0^\infty f^p(t)w(t)dt
& \leq \int_0^\infty\lr\frac{\alpha+1}{t^{\alpha+1}}\int_0^t f^{p_0}(s)s^\alpha ds\rr^{p/p_0}w(t)dt \nonumber \\
& \leq \varphi(1)^{p/p_0}\int_0^\infty\lr\frac{\alpha+1}{t^{\alpha+1}}\int_0^t g^{p_0}(s)s^\alpha ds\rr^{p/p_0}w(t)dt \nonumber \\
& = \varphi(1)^{p/p_0}\int_0^\infty\lr S_\psi g^{p_0}(t)\rr^{p/p_0}w(t)dt, \label{ineq a1}
\end{align}
with $\psi(s)= s^\alpha.$

Now, let $w \in QB_{\beta,\infty}.$ Then there exists $q>0$ such that $w\in QB_{\beta,q}.$ We can choose $\alpha>-1$ such that 
$q= (\alpha +1)\frac{p}{p_0}.$ Then $w\in QB_{\beta,(\alpha +1)\frac{p}{p_0}},$ which in view of $(\ref{eqn aa})$ implies that the following holds for all $r>0:$ 
\[
\int_r^\infty \lr\frac{r}{t}\rr^{(\alpha +1)\frac{p}{p_0}}w(t)dt \leq (C-1)\int_0^r \lr\frac{t}{r}\rr^{\beta(\alpha +1)\frac{p}{p_0}}w(t)dt
\]
or, equivalently
\[
\int_r^\infty \lr\frac{\Psi(r)}{\Psi(t)}\rr^{p/p_0}w(t)dt \leq (C-1)\int_0^r \lr\frac{\Psi(t)}{\Psi(r)}\rr^{\beta p/p_0}w(t)dt
\]
with $\psi(s)=s^\alpha$. But the last inequality, in view of Theorem A, holds if and only if 
\be \label{eqn 7}
\int_0^\infty \lr S_\psi g^{p_0}(t)\rr^{p/p_0}w(t)dt \leq \frac{C}{(\beta+1)^{p/p_0}}\int_0^\infty g^p(t)w(t)dt,
\ee
where $C=[w]_{QB_{\beta,(\alpha +1)\frac{p}{p_0}}}.$ 
Now $(\ref{ineq a1})$ and $(\ref{eqn 7})$ give that
\[
\int_0^\infty f^p(t)w(t)dt \leq [w]_{QB_{\beta,(\alpha +1)\frac{p}{p_0}}}\lr\frac{\varphi(1)}{\beta+1}\rr^{p/p_0}\int_0^\infty g^p(t)w(t)dt,
\]
so that on taking the infimum over all $\alpha > -1,$ the assertion follows.
\end{proof}

\section{Extrapolation results in grand Lebesgue spaces}
In this section, we shall prove a version of the extrapolation result (Theorem $\ref{thm 1}$) in the framework of grand Lebesgue spaces defined on finite intervals, which without any loss of generality is taken as $I=(0,1)$.\\

Let $0<p<\infty$ and $-1<\beta<\infty$. We say that a weight function $w$ on $I$ belongs to the class$QB_{\beta,p}(I)$ if there exists a constant $C>0$ such that the inequality:
\[
\int_r^1 \lr\frac{r}{t}\rr^p w(t)dt \leq C\int_0^r \lr\frac{t}{r}\rr^{\beta p}w(t)dt
\]
holds for all $0<r\leq 1.$ Also, for $0<r\leq 1,$ we set 
\[
[w]_{QB_{\beta,p}(I)}:= \inf \left \{C>1: \int_r^1 \lr\frac{r}{t}\rr^p w(t)dt \leq (C-1)\int_0^r \lr\frac{t}{r}\rr^{\beta p}w(t)dt \right \}.
\]

It can be seen that if $0<p<\infty$ and $w\in QB_{\beta,p}(I),$ then the function $\widetilde{w}= w \chi_I \in QB_{\beta,p}$ and 
\[
[w]_{QB_{\beta,p}(I)}=[\widetilde{w}]_{QB_{\beta,p}}.
\]
Following the arguments used in Lemma \ref{lemma b}, we can prove:
\begin{lemma} \label{prop 1}
Let $-1<\beta \leq 0$ and $1\leq p<\infty.$ If $x^\alpha \in QB_{\beta,p}(I),$ then there exists $0 <\varepsilon < p(\beta +1)$ such that $x^\alpha \in QB_{\beta,p-\varepsilon}(I).$
\end{lemma}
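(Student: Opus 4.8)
The plan is to repeat the proof of Lemma \ref{lemma b} essentially word for word, with the integral over $(r,\infty)$ replaced by the integral over $(r,1)$; the only delicate point is that on a bounded interval the admissible range of the exponent $\alpha$ is slightly larger, so one first needs the analogue of the equivalence (\ref{eqn 1})$\Leftrightarrow$(\ref{eqn 2}). Writing the defining inequality of $QB_{\beta,p}(I)$ for $w=x^\alpha$ as $r^{p}\int_r^1 x^{\alpha-p}\,dx\le C\,r^{-\beta p}\int_0^r x^{\beta p+\alpha}\,dx$ and letting $r\to 0^{+}$, the left-hand side is comparable to $r^{\alpha+1}$ when $\alpha<p-1$, to $r^{p}\log(1/r)$ when $\alpha=p-1$, and to $r^{p}$ when $\alpha>p-1$, whereas the right-hand side equals a constant multiple of $r^{\alpha+1}$ when $\beta p+\alpha+1>0$ and is identically $+\infty$ (so the inequality is vacuous) when $\beta p+\alpha+1\le 0$. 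Since the weight is locally integrable one has $\alpha>-1$, and since $\beta>-1$ forces $-\beta p-1<p-1$, the ``vacuous'' range $\{\alpha\le-\beta p-1\}$ is contained in $\{\alpha<p-1\}$. Comparing the asymptotics one obtains
\[
x^\alpha\in QB_{\beta,p}(I)\quad\Longleftrightarrow\quad -1<\alpha<p-1,
\]
the only difference with (\ref{eqn 2}) being that on the bounded interval the lower restriction $-\beta p-1<\alpha$ — which on $\mathbb{R}^+$ comes from integrability at infinity — disappears, leaving only the local-integrability bound $\alpha>-1$.

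Granting $x^\alpha\in QB_{\beta,p}(I)$, hence $-1<\alpha<p-1$, I would then pick $\varepsilon$ with $0<\varepsilon<\min\{p(\beta+1),\,p-\alpha-1\}$, a non-empty range since $\beta>-1$ makes $p(\beta+1)>0$ and $\alpha<p-1$ makes $p-\alpha-1>0$. This $\varepsilon$ satisfies $0<\varepsilon<p(\beta+1)$ as required and $\alpha<(p-\varepsilon)-1$, so the equivalence above, applied with $p-\varepsilon$ in place of $p$, yields $x^\alpha\in QB_{\beta,p-\varepsilon}(I)$. If one wants an explicit constant in the style of Lemma \ref{lemma b}, one simply copies its computation: $\int_r^1(r/x)^{p-\varepsilon}x^\alpha\,dx=\dfrac{r^{\alpha+1}-r^{\,p-\varepsilon}}{p-\varepsilon-\alpha-1}\le\dfrac{r^{\alpha+1}}{p-\varepsilon-\alpha-1}$, and when $\beta(p-\varepsilon)+\alpha+1>0$ one compares this with $\int_0^r(x/r)^{\beta(p-\varepsilon)}x^\alpha\,dx=\dfrac{r^{\alpha+1}}{\beta(p-\varepsilon)+\alpha+1}$ to read off a constant of the same shape as $C^{*}$ there; when $\beta(p-\varepsilon)+\alpha+1\le 0$ the right-hand side is $+\infty$ and there is nothing to check.

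The only step that is not a mechanical transcription of Lemma \ref{lemma b} is the first one: one must recognise that for power weights on the bounded interval the membership condition becomes $-1<\alpha<p-1$, because the right-hand integral $\int_0^r(x/r)^{\beta p}x^\alpha\,dx$ may diverge at the origin, making the defining inequality hold vacuously for $\alpha\in(-1,-\beta p-1]$. Once this modified range is in hand, selecting $\varepsilon$ and, if desired, producing the explicit constant are immediate.
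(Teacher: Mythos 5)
Your proof is correct and follows the same route the paper intends: the paper gives no separate argument for this lemma, simply invoking the proof of Lemma~\ref{lemma b}, and your transcription of that computation to $(r,1)$ (including the choice $0<\varepsilon<\min\{p(\beta+1),\,p-\alpha-1\}$ and the explicit constant) is sound.

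The one point where you genuinely add something is the analysis of the admissible range of $\alpha$. Your observation is right: when $-1<\alpha\le-\beta p-1$ (possible only for $\beta<0$) the integral $\int_0^r (x/r)^{\beta p}x^{\alpha}\,dx$ diverges, so the defining inequality of $QB_{\beta,p}(I)$ holds vacuously, and under a literal reading of the definition the membership condition on $I$ is $-1<\alpha<p-1$. This conflicts with the paper's own remark in Section~3, which asserts membership iff $-\beta p-1<\alpha<p-1$; the discrepancy lives entirely in the degenerate regime where the right-hand side is $+\infty$ (and the same vacuous phenomenon in fact occurs for the half-line class in (\ref{eqn 2}), so it is not special to the bounded interval). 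Either convention is workable, and your argument is robust to the choice: under the paper's reading you simply have the extra information $\alpha>-\beta p-1$, which makes your $\varepsilon$ automatically satisfy $\varepsilon<p(\beta+1)$ and puts you in the non-vacuous case $\beta(p-\varepsilon)+\alpha+1>0$, exactly as in Lemma~\ref{lemma b}; under your reading the vacuous case needs no verification at all. So the lemma is proved either way, but it would be worth making the intended convention explicit, since the stated remark and the literal definition do not quite agree.
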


\begin{definition}
For a given $-1<\beta<\infty,$ a weight function $w\in \widehat{Q}B_{\beta,p}(I)$ if \\
(i) $w\in QB_{\beta,p}(I)$; and \\
(ii) there exists $0 < \varepsilon <p(\beta +1)$ such that $w\in QB_{\beta,p-\varepsilon}(I).$
\end{definition}
\begin{remark}
It can be checked that for $-1<\beta \leq 0,$ the power weights $x^\alpha \in QB_{\beta,p}(I)$ if and only if $-\beta p-1<\alpha<p-1.$ Then, in view of  Lemma $\ref{prop 1},$ the class $ \widehat{Q}B_{\beta,p}(I)$ is reasonably defined.
\end{remark}

It is seen that Theorem $\ref{thm 1}$ can be modified for the interval $I.$  We state it formally for later purpose.

\begin{theorem} \label{thm 2}
Let $\varphi \uparrow$ be defined on $\mathbb {R}^+$ and $(f,g)$ be a pair of functions such that $f,g \in Q_{\beta}(I), ~-1<\beta\leq 0$. Let $1 \le p_0<\infty$ and that for every weight function $w \in QB_{\beta,p_0}(I),$ the inequality 
\[
\int_0^1 f^{p_0}(x)w(x)dx \leq \varphi\lr[w]_{QB_{\beta,p_0}(I)}\rr\int_0^1 g^{p_0}(x)w(x)dx
\]
holds. Then for every $p_0 \leq p<\infty$ and every $w\in \widehat{Q}B_{\beta,p}(I),$ the following inequality holds:
\[
\int_0^1 f^p(x)w(x)dx \leq K'(p)\int_0^1 g^p(x)w(x)dx,
\]
where 
\[
K'(p):= \displaystyle\inf_{0<\delta <p_0(\beta+1)}[w]_{QB_{\beta,(p_0-\delta)\frac{p}{p_0}}(I)}\left[ \frac{1}{\beta+1}\lr \frac{p_0(\beta+1)-\delta}{p_0-\delta}\rr \varphi \lr\frac{p_0(\beta+1)}{\delta}\rr\right]^{p/p_0}.
\]
\end{theorem}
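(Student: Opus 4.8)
The plan is to transcribe the proof of Theorem~\ref{thm 1} to the interval $I=(0,1)$, once its two inputs --- Theorem~A and Lemma~\ref{lemma 0} --- have been localized to $I$. The localization follows from the correspondence already noted, $w\in QB_{\beta,p}(I)\iff w\chi_I\in QB_{\beta,p}$ with equal constants (and its evident analogue for the $\psi$-weighted condition (\ref{eqn *}) read with $\int_r^1$ in place of $\int_r^\infty$), together with the facts that for $x\in I$ the average $S_\psi h(x)=\Psi(x)^{-1}\int_0^x h\psi$ only involves $h|_{(0,x)}\subset I$ and that $h\in Q_\beta(I)$ implies $h\chi_I\in Q_\beta$. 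Applying Theorem~A to $h\chi_I$ and the weight $w\chi_I$ and then restricting the resulting inequality to $(0,1)$ yields the interval form of Theorem~A: for $h\in Q_\beta(I)$ and $w\in QB_{\beta,\psi,p}(I)$,
\[
\int_0^1\bigl(S_\psi h(x)\bigr)^{p}w(x)\,dx\le\frac{[w]_{QB_{\beta,\psi,p}(I)}}{(\beta+1)^{p}}\int_0^1 h^{p}(x)w(x)\,dx,
\]
where $[w]_{QB_{\beta,\psi,p}(I)}$ is defined as in (\ref{eq23}) with $\int_r^1$ in place of $\int_r^\infty$. Similarly, applying Lemma~\ref{lemma 0} to the pair $(f\chi_I,g\chi_I)$ and restricting yields its interval form: if $\int_0^1 f w\le\varphi\bigl([w]_{QB_{\beta,\psi,p_0}(I)}\bigr)\int_0^1 g w$ holds for every $w\in QB_{\beta,\psi,p_0}(I)$, then for $0<\delta<p_0(\beta+1)$ and $0<t\le1$,
\[
\int_0^t f(s)\Psi(s)^{p_0-1-\delta}\psi(s)\,ds\le\varphi\Bigl(\frac{p_0(\beta+1)}{\delta}\Bigr)\int_0^t g(s)\Psi(s)^{p_0-1-\delta}\psi(s)\,ds.
\]
In the two applications below $\psi\equiv1$, resp.\ $\psi(s)=s^{p_0-\delta-1}$, whose primitive $\Psi$ on $\mathbb{R}^+$ still satisfies $\Psi(\infty)=\infty$; in any case, that hypothesis enters the proof of Lemma~\ref{lemma 0} only to discard a non-negative term, and the assumption $f,g\in Q_\beta$ made there is not used in its proof, so the lemma applies to the pair $f^{p_0},g^{p_0}$ as well.

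Now fix $p_0\le p<\infty$, $w\in\widehat{Q}B_{\beta,p}(I)$, and $0<\delta<p_0(\beta+1)$. As in Theorem~\ref{thm 1}, put $h(x)=x^{-\beta}f(x)$, which is non-increasing on $I$, so that $\int_0^1 f^{p}w=\int_0^1 h^{p}(x)x^{\beta p}w(x)\,dx$, and, because $h\downarrow$,
\[
h^{p_0}(x)\le\frac{p_0(\beta+1)-\delta}{x^{p_0(\beta+1)-\delta}}\int_0^x h^{p_0}(s)s^{p_0(\beta+1)-\delta-1}\,ds\qquad(x\in I).
\]
Inserting this, using $\int_0^x h^{p_0}(s)s^{p_0(\beta+1)-\delta-1}\,ds=\int_0^x f^{p_0}(s)s^{p_0-\delta-1}\,ds$, cancelling the surviving power of $x$, and then replacing $f^{p_0}$ by $g^{p_0}$ inside the inner integral via the interval form of Lemma~\ref{lemma 0} (applied to $f^{p_0},g^{p_0}$ with $\psi\equiv1$ and the hypothesis of the theorem), one obtains
\[
\int_0^1 f^{p}w\le\gamma\int_0^1\Bigl(\frac{p_0-\delta}{x^{p_0-\delta}}\int_0^x g^{p_0}(s)s^{p_0-\delta-1}\,ds\Bigr)^{p/p_0}w(x)\,dx=\gamma\int_0^1\bigl(S_\psi g^{p_0}(x)\bigr)^{p/p_0}w(x)\,dx,
\]
with $\psi(s)=s^{p_0-\delta-1}$, $\Psi(x)=x^{p_0-\delta}/(p_0-\delta)$, and $\gamma=\bigl(\tfrac{p_0(\beta+1)-\delta}{p_0-\delta}\bigr)^{p/p_0}\varphi\bigl(\tfrac{p_0(\beta+1)}{\delta}\bigr)^{p/p_0}$.

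It remains to bound $S_\psi g^{p_0}$ in $L^{p/p_0}_w(I)$. Since $w\in\widehat{Q}B_{\beta,p}(I)$, there is $\widetilde{\varepsilon}\in(0,p(\beta+1))$ with $w\in QB_{\beta,p-\widetilde{\varepsilon}}(I)$; taking $\delta=\tfrac{p_0}{p}\widetilde{\varepsilon}$ (then $0<\delta<p_0(\beta+1)$) gives $(p_0-\delta)\tfrac{p}{p_0}=p-\widetilde{\varepsilon}$, hence $w\in QB_{\beta,(p_0-\delta)p/p_0}(I)$. Since $\Psi(r)/\Psi(x)=(r/x)^{p_0-\delta}$, this membership is exactly the interval $QB_{\beta,\psi,p/p_0}(I)$ condition, with $[w]_{QB_{\beta,\psi,p/p_0}(I)}=[w]_{QB_{\beta,(p_0-\delta)p/p_0}(I)}$, so the interval form of Theorem~A gives
\[
\int_0^1\bigl(S_\psi g^{p_0}(x)\bigr)^{p/p_0}w(x)\,dx\le[w]_{QB_{\beta,(p_0-\delta)p/p_0}(I)}\,(\beta+1)^{-p/p_0}\int_0^1 g^{p}(x)w(x)\,dx.
\]
Combining this with the previous display,
\[
\int_0^1 f^{p}(x)w(x)\,dx\le[w]_{QB_{\beta,(p_0-\delta)p/p_0}(I)}\Bigl[\frac{1}{\beta+1}\Bigl(\frac{p_0(\beta+1)-\delta}{p_0-\delta}\Bigr)\varphi\Bigl(\frac{p_0(\beta+1)}{\delta}\Bigr)\Bigr]^{p/p_0}\int_0^1 g^{p}(x)w(x)\,dx,
\]
and taking the infimum over all $\delta\in(0,p_0(\beta+1))$ that arise in this way produces the constant $K'(p)$ of the statement. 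The only step that actually requires proof, as opposed to a rerun of the computations of Theorem~\ref{thm 1}, is the localization of Theorem~A and Lemma~\ref{lemma 0} carried out in the first paragraph; I expect no obstacle there, nor anywhere else.
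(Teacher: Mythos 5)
Your proof is correct and takes exactly the route the paper intends: the paper gives no separate proof of this statement, saying only that Theorem~\ref{thm 1} ``can be modified for the interval $I$,'' and your argument is precisely that modification, with the localization of Theorem~A and Lemma~\ref{lemma 0} (and the harmless role of the hypothesis $\Psi(\infty)=\infty$) spelled out explicitly.
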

In this section, we shall prove Theorem $\ref{thm 1}$ in the framework of grand Lebesgue spaces $L^{p),\theta}(I)$ which consist of all measurable functions $f$ finite a.e. on $I$ for which 
\[
\|f\|_{L^{p),\theta}(I)}:= \sup_{0<\varepsilon<p-1}\lr \varepsilon^\theta\int_0^1 |f(t)|^{p-\varepsilon}dt\rr^{1/(p-\varepsilon)} < \infty.
\]

These spaces without weight have been defined in \cite{gis}, which in fact, were initially defined for $\theta=1$ by Iwaniec and Sbordone \cite{is} and later have been generalized, studied and applied by several people in different directions. We refer to 
\cite{pma1} and the references therein. For some very recent updates on grand Lebesgue spaces, we mention \cite{fk}, \cite{th}, \cite{jmsv}, \cite{vmk}, \cite{km1}, \cite{km2}, \cite{mnova}. \\

We now prove the  following:
\begin{theorem} \label{thm 3}
Let $\theta>0, ~\varphi$ be a non-negative $\uparrow$ function defined on $(0, \infty),~-1<\beta\leq 0, 1 < p_0<\infty$ and $(f,g)$ be a pair of functions such that $f,g \in Q_\beta (I)$. Suppose that for every $w\in QB_{\beta,p_0}(I)$, the following inequality holds:
\[
\int_0^1 f^{p_0}(x)w(x)dx \leq \varphi\lr[w]_{QB_{\beta,p_0}(I)}\rr\int_0^1 g^{p_0}(x)w(x)dx.
\]
Then for every $p:~p_0 \le p<\infty$  and every $w\in \widehat{Q}B_{\beta,p}(I),$ the inequality 
\[
\|f\|_{L_w^{p),\theta}(I)} \leq C^* \|g\|_{L_w^{p),\theta}(I)}
\]
holds with 
\[
C^* = \displaystyle \inf_{0<\sigma < p-1} \left [\max \left \{ 1,p^\theta\sigma^{-\frac{\theta}{p-\sigma}}\lr W(I) +1\rr^{\frac{p-1-\sigma}{p-\sigma}}\right\} \sup_{0<\varepsilon\leq\sigma}{(K'(p-\varepsilon))}^\frac{1}{p-\varepsilon} \right]. 
\]
\end{theorem}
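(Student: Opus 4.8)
The plan is to reduce the weighted grand norm to a family of genuine weighted Lebesgue inequalities indexed by $\varepsilon\in(0,p-1)$ and to feed each of them into Theorem~\ref{thm 2}, whose hypothesis is precisely the one assumed here, so that it is available at every exponent $q\in[p_0,\infty)$ and every $w\in\widehat{Q}B_{\beta,q}(I)$. Recall that
\[
\|h\|_{L_w^{p),\theta}(I)}=\sup_{0<\varepsilon<p-1}\left(\varepsilon^{\theta}\int_0^1 |h(t)|^{p-\varepsilon}w(t)\,dt\right)^{1/(p-\varepsilon)},
\]
so it suffices to dominate $\left(\varepsilon^{\theta}\int_0^1 f^{p-\varepsilon}w\right)^{1/(p-\varepsilon)}$ by $C^{*}\|g\|_{L_w^{p),\theta}(I)}$ uniformly in $\varepsilon$. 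Fix $w\in\widehat{Q}B_{\beta,p}(I)$; by definition there is $\varepsilon_0\in(0,p(\beta+1))$ with $w\in QB_{\beta,p-\varepsilon_0}(I)$. I would split the range of $\varepsilon$ at a free parameter $\sigma$, taken in the admissible range $0<\sigma<\min\{p-1,\,p-p_0,\,\varepsilon_0\}$; for $\sigma$ outside this range the resulting estimate carries a $QB$-constant equal to $+\infty$ (the working exponent drops below $p_0$, or the required $QB_{\beta,\cdot}(I)$-membership fails), so such $\sigma$ do not affect the final infimum over $(0,p-1)$. Throughout, $W(I)=\int_0^1 w<\infty$.

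For small $\varepsilon$, i.e. $0<\varepsilon\le\sigma$, I would first verify $w\in\widehat{Q}B_{\beta,p-\varepsilon}(I)$: the monotonicity of the $QB$-classes in the exponent (Remark~\ref{rmk21}(2), transferred to $I$ via $[w]_{QB_{\beta,q}(I)}=[w\chi_I]_{QB_{\beta,q}}$) gives $w\in QB_{\beta,p-\varepsilon}(I)$ since $p-\varepsilon\ge p-\varepsilon_0$, while $w\in QB_{\beta,(p-\varepsilon)-(\varepsilon_0-\varepsilon)}(I)=QB_{\beta,p-\varepsilon_0}(I)$ with $\varepsilon_0-\varepsilon\in(0,(p-\varepsilon)(\beta+1))$, the upper bound using $\beta\le0$. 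As $p-\varepsilon\ge p-\sigma\ge p_0$, Theorem~\ref{thm 2} applies at the exponent $p-\varepsilon$ and yields $\int_0^1 f^{p-\varepsilon}w\le K'(p-\varepsilon)\int_0^1 g^{p-\varepsilon}w$; multiplying by $\varepsilon^{\theta}$ and taking $(p-\varepsilon)$-th roots,
\[
\left(\varepsilon^{\theta}\int_0^1 f^{p-\varepsilon}w\right)^{1/(p-\varepsilon)}\le\Big(\sup_{0<\varepsilon\le\sigma}K'(p-\varepsilon)^{1/(p-\varepsilon)}\Big)\|g\|_{L_w^{p),\theta}(I)}.
\]

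For large $\varepsilon$, i.e. $\sigma<\varepsilon<p-1$, we have $1<p-\varepsilon<p-\sigma$ and Theorem~\ref{thm 2} need not apply at exponent $p-\varepsilon$; the idea is to freeze the exponent at $p-\sigma$ by Hölder's inequality with exponent $(p-\sigma)/(p-\varepsilon)>1$:
\[
\int_0^1 f^{p-\varepsilon}w\le\left(\int_0^1 f^{p-\sigma}w\right)^{\frac{p-\varepsilon}{p-\sigma}}W(I)^{\frac{\varepsilon-\sigma}{p-\sigma}}.
\]
Applying the previous step at $\varepsilon=\sigma$ together with $\int_0^1 g^{p-\sigma}w\le\sigma^{-\theta}\|g\|_{L_w^{p),\theta}(I)}^{p-\sigma}$ gives $\int_0^1 f^{p-\sigma}w\le K'(p-\sigma)\sigma^{-\theta}\|g\|_{L_w^{p),\theta}(I)}^{p-\sigma}$; substituting, taking $(p-\varepsilon)$-th roots, and using the elementary bounds $\varepsilon^{\theta/(p-\varepsilon)}\le p^{\theta}$ (valid since $\varepsilon<p$, $p-\varepsilon>1$, $p>1$), $W(I)^{\frac{\varepsilon-\sigma}{(p-\sigma)(p-\varepsilon)}}\le(W(I)+1)^{\frac{p-1-\sigma}{p-\sigma}}$ (the exponent increases in $\varepsilon$ on $(\sigma,p-1)$ with supremum $\frac{p-1-\sigma}{p-\sigma}$), and $K'(p-\sigma)^{1/(p-\sigma)}\le\sup_{0<\varepsilon\le\sigma}K'(p-\varepsilon)^{1/(p-\varepsilon)}$, one obtains
\[
\left(\varepsilon^{\theta}\int_0^1 f^{p-\varepsilon}w\right)^{1/(p-\varepsilon)}\le p^{\theta}\sigma^{-\frac{\theta}{p-\sigma}}(W(I)+1)^{\frac{p-1-\sigma}{p-\sigma}}\Big(\sup_{0<\varepsilon\le\sigma}K'(p-\varepsilon)^{1/(p-\varepsilon)}\Big)\|g\|_{L_w^{p),\theta}(I)}.
\]
Combining both ranges, taking the supremum over $\varepsilon\in(0,p-1)$ and then the infimum over admissible $\sigma$ reproduces $\|f\|_{L_w^{p),\theta}(I)}\le C^{*}\|g\|_{L_w^{p),\theta}(I)}$.

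I expect the main obstacle to be the uniformity of the estimate as $\varepsilon\to p-1$, i.e. as the working exponent $p-\varepsilon$ decreases towards $1$: there Theorem~\ref{thm 2} is genuinely unavailable (the exponent falls below $p_0$, and even the membership $w\in\widehat{Q}B_{\beta,p-\varepsilon}(I)$ may fail), and $K'(p-\varepsilon)$ has no reason to remain bounded. The Hölder reduction to the single frozen exponent $p-\sigma$ is the device that circumvents this, at the cost of the $W(I)$-factor. Once the splitting is set up, the rest — the $\widehat{Q}B$-membership bookkeeping at shifted exponents and the few elementary inequalities above — is routine, and serves only to pin down the precise constant $C^{*}$.
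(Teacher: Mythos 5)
Your proposal is correct and follows essentially the same route as the paper's proof: split the supremum defining the grand norm at a threshold $\sigma$ chosen so that $w\in QB_{\beta,p-\sigma}(I)$ and $p-\sigma\ge p_0$, apply Theorem \ref{thm 2} at each exponent $p-\varepsilon$ for $\varepsilon\le\sigma$ (after the same $\widehat{Q}B$-membership bookkeeping using $\beta\le 0$), and handle $\sigma<\varepsilon<p-1$ by H\"older's inequality with indices $\tfrac{p-\sigma}{p-\varepsilon}$ and $\tfrac{p-\sigma}{\varepsilon-\sigma}$, absorbing the $W(I)$-factor and the power of $\varepsilon$ into the stated constant. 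The elementary bounds you use to assemble $C^*$ are the same ones the paper uses, so no further comment is needed.
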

\begin{proof}
Let $w\in \widehat{Q}B_{\beta,p}(I),$ then by definition $w\in QB_{\beta,p}(I),$ and there exists $0<\xi<p(\beta +1)$ such that $w\in QB_{\beta,p-\xi}(I).$ Take $\sigma = \min \{ \xi, p-p_0\}.$ Clearly $0< \sigma < p-1,$ so, by Remark \ref{rmk21}, $w\in QB_{\beta,p-\sigma}(I).$ Let $\varepsilon \in (0, \sigma).$ Then, in view of the fact that $\widehat{Q}B_{\beta,p} \subset \widehat{Q}B_{\beta,q}$ for $p < q,$ we have $w \in \widehat{Q}B_{\beta,p-\varepsilon}(I).$ Therefore, by Theorem $\ref{thm 2}$ we have
\be \label{eqn31}
\int_0^1 f^{p-\varepsilon}(x)w(x)dx \leq K'(p-\varepsilon)\int_0^1 g^{p-\varepsilon}(x)w(x)dx,
\ee
where 
\[
K'(p-\varepsilon) := \displaystyle\inf_{0<\delta <p_0(\beta+1)}[w]_{QB_{\beta,(p_0-\delta)\frac{p-\varepsilon}{p_0}}(I)}\left[ \frac{1}{\beta+1}\lr \frac{p_0(\beta+1)-\delta}{p_0-\delta}\rr \varphi \lr\frac{p_0(\beta+1)}{\delta}\rr\right]^{p-\varepsilon/p_0}.
\]
Now, for $\sigma<\varepsilon<p-1,$ using H\"older's inequality with the indices $\frac{p-\sigma}{p-\varepsilon}$ and $\frac{p-\sigma}{\varepsilon-\sigma},$ we obtain
\begin{align}
\|f\|_{L_w^{p-\varepsilon}}
& = \lr\int_0^1 f^{p-\varepsilon}(x)w(x)dx\rr^{1/{p-\varepsilon}} \nonumber \\
& \leq \lr\int_0^1 f^{p-\sigma}(x)w(x)dx\rr^{\frac{1}{p-\sigma}}\lr W(I) \rr^{\frac{\varepsilon-\sigma}{(p-\sigma)(p-\varepsilon)}} \nonumber \\
& \leq \lr\int_0^1 f^{p-\sigma}(x)w(x)dx\rr^{\frac{1}{p-\sigma}}\lr W(I) +1\rr^\frac{p-1-\sigma}{p-\sigma}.   \label{ineq z7}
\end{align}
Now in view of  $(\ref{eqn31})$ and  $(\ref{ineq z7}),$ we get
\begin{align*}
\|f\|_{L_w^{p),\theta}(I)}
&= \max \left \{ \sup_{0<\varepsilon\leq\sigma}\varepsilon^{\frac{\theta}{p-\varepsilon}}\|f\|_{L_w^{p-\varepsilon}},\sup_{\sigma<\varepsilon < p-1}\varepsilon^{\frac{\theta}{p-\varepsilon}}\|f\|_{L_w^{p-\varepsilon}}\right\}\\
& \leq \max \left \{ \sup_{0<\varepsilon\leq\sigma}\varepsilon^{\frac{\theta}{p-\varepsilon}}\|f\|_{L_w^{p-\varepsilon}},\sup_{\sigma<\varepsilon < p-1}\varepsilon^{\frac{\theta}{p-\varepsilon}}\|f\|_{L_w^{p-\sigma}}\lr W(I) +1\rr^\frac{p-1-\sigma}{p-\sigma}\right\}\\
& \leq \max \left \{ 1,p^\theta \sigma^{-\frac{\theta}{p-\sigma}}\lr W(I) +1\rr^\frac{p-1-\sigma}{p-\sigma}\right\}\sup_{0<\varepsilon\leq\sigma}\varepsilon^{\frac{\theta}{p-\varepsilon}}\|f\|_{L_w^{p-\varepsilon}}\\
& \leq \max \left \{ 1,p^\theta \sigma^{-\frac{\theta}{p-\sigma}}\lr W(I) +1\rr^{\frac{p-1-\sigma}{p-\sigma}}\right\}\sup_{0<\varepsilon\leq\sigma}\varepsilon^{\frac{\theta}{p-\varepsilon}}(K'(p-\varepsilon))^\frac{1}{p-\varepsilon}\|g\|_{L_w^{p-\varepsilon}}\\
& \leq \max \left \{ 1,p^\theta \sigma^{-\frac{\theta}{p-\sigma}}\lr W(I) +1\rr^{\frac{p-1-\sigma}{p-\sigma}}\right\}\sup_{0<\varepsilon\leq\sigma}(K'(p-\varepsilon))^\frac{1}{p-\varepsilon} \|g\|_{L_w^{p),\theta}(I)}\\
& = C^* \|g\|_{L_w^{p),\theta}(I)},
\end{align*}
where $C^* = c(p,\theta,\sigma) \displaystyle \sup_{0<\varepsilon \leq\sigma} (K'(p-\varepsilon))^\frac{1}{p-\varepsilon}$ and
\[
c(p,\theta,\sigma) = \max \left \{ 1,~p^\theta \sigma^{-\frac{\theta}{p-\sigma}}\lr W(I) +1\rr^{\frac{p-1-\sigma}{p-\sigma}}\right\}.
\]
The proof is completed.
\end{proof}

\section{Application}

We provide an application of the extrapolation result proved in the previous section to characterize the boundedness of the Hardy averaging operator $H$
between weighted grand Lebesgue spaces $L_w^{p),\theta}(I)$ for quasi-monotone functions. We prove the following:
\begin{theorem} \label{thm 4}
Let $1< p<\infty, -1<\beta\leq 0$ and $\theta>0.$ The inequality
\be \label{eqn z} 
\|Hf\|_{L_w^{p),\theta}(I)} \leq C\|f\|_{L_w^{p),\theta}(I)}
\ee
holds for all $f \in Q_\beta (I)$ if and only if $w\in \widehat{Q}B_{\beta,p}(I).$
\end{theorem}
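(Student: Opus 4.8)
The plan is to split the equivalence into its two implications and obtain each from the extrapolation machinery of Theorem~\ref{thm 3} together with the known characterization of the boundedness of $H$ on ordinary weighted Lebesgue spaces for quasi-monotone functions. Recall that for $f\in Q_\beta(I)$, the inequality $\int_0^1 (Hf)^{q}w\le C\int_0^1 f^{q}w$ holds for all $f\in Q_\beta(I)$ if and only if $w\in QB_{\beta,q}(I)$; this is the interval version of the result quoted from \cite{pma} in the Introduction (equivalently Theorem~A with $\psi\equiv 1$, so $S_\psi=H$). I would state this once at the start of the proof as the Lebesgue-space prototype.

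For the sufficiency direction, assume $w\in\widehat{Q}B_{\beta,p}(I)$. Fix $p_0$ with $1<p_0<p$ (or $p_0=p$ if one prefers, but one needs $p_0$ strictly below $p$ to have room for the $\varepsilon$-perturbations, so take e.g. $p_0$ close to $1$); since $w\in QB_{\beta,p}(I)$ and, by Remark~\ref{rmk21}(2), $QB_{\beta,p_0}(I)\subset QB_{\beta,p}(I)$ is the wrong direction, instead I would run the extrapolation downward: apply the Lebesgue prototype at the single exponent $p_0$ to those weights $v\in QB_{\beta,p_0}(I)$, getting $\int_0^1(Hf)^{p_0}v\le \varphi([v]_{QB_{\beta,p_0}(I)})\int_0^1 f^{p_0}v$ with $\varphi(t)=t$ (the constant in Theorem~A depends only on the $QB$-constant). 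This is exactly the hypothesis of Theorem~\ref{thm 3} with the pair $(Hf,f)$. Theorem~\ref{thm 3} then yields $\|Hf\|_{L_w^{p),\theta}(I)}\le C^*\|f\|_{L_w^{p),\theta}(I)}$ for every $w\in\widehat{Q}B_{\beta,p}(I)$, which is \eqref{eqn z}. The only point needing care is that the pair $(Hf,f)$ stays inside $Q_\beta(I)$: $f\in Q_\beta(I)$ by assumption, and one checks $Hf\in Q_\beta(I)$, i.e.\ $x^{-\beta}Hf(x)$ is non-increasing — this follows because $x^{-\beta}f(x)$ is non-increasing and a short computation (differentiate, or use the integral representation $Hf(x)=\int_0^1 f(xu)\,du$) shows $H$ maps $Q_\beta$ into $Q_\beta$ for $\beta>-1$.

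For the necessity direction, assume \eqref{eqn z} holds for all $f\in Q_\beta(I)$. The strategy is to feed the grand-norm inequality suitable test functions and let the grand space collapse to an ordinary $L^q_w$. Concretely, fix $r\in(0,1)$ and take $f=\chi_{(0,r)}$, or more flexibly $f=f_r$ a quasi-monotone function adapted to the scale $r$ (e.g.\ $f_r(x)=\min\{1,(r/x)^{\,\gamma}\}$-type, kept in $Q_\beta(I)$); one computes $Hf_r$ explicitly and plugs both sides into the $\|\cdot\|_{L_w^{p),\theta}(I)}$ norm. Since for a fixed function the supremum defining the grand norm is comparable, up to the factor $\varepsilon^{\theta/(p-\varepsilon)}$, to a single $L_w^{p-\varepsilon}$ norm, the inequality \eqref{eqn z} forces an estimate of the form $\int_0^1(Hf_r)^{p-\varepsilon}w\le C'\int_0^1 f_r^{p-\varepsilon}w$ for an appropriate $\varepsilon=\varepsilon(r)$ (one extracts the dominant term in the sup by an argument as in the proof of Theorem~\ref{thm 3}, splitting at a threshold $\sigma$). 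Writing out $\int_0^1(Hf_r)^{p-\varepsilon}w\ge \int_r^1 (r/x)^{p-\varepsilon}w(x)\,dx$ (since $Hf_r(x)\ge (r/x)\cdot(\text{const})$ on $(r,1)$ for the chosen $f_r$) and $\int_0^1 f_r^{p-\varepsilon}w\lesssim \int_0^r (x/r)^{\beta(p-\varepsilon)}w(x)\,dx$ on the right, one obtains $w\in QB_{\beta,p-\varepsilon}(I)$ for some $0<\varepsilon<p(\beta+1)$, i.e.\ condition (ii) of the definition of $\widehat{Q}B_{\beta,p}(I)$; and taking $\varepsilon\to0$ (or the test function $f=\chi_{(0,r)}$ directly at exponent $p$) gives $w\in QB_{\beta,p}(I)$, which is condition (i). Hence $w\in\widehat{Q}B_{\beta,p}(I)$.

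The main obstacle I anticipate is the necessity half: unlike the sufficiency, which is a clean invocation of Theorem~\ref{thm 3}, the reverse direction requires choosing the test functions $f_r$ so that (a) they genuinely lie in $Q_\beta(I)$, (b) $Hf_r$ is bounded below by the right power $(r/x)$ on $(r,1)$ with a constant independent of $r$, and (c) the passage from the grand norm back to a single Lebesgue exponent — handling the $\sup_{0<\varepsilon<p-1}$ and the weight $W(I)$ uniformly in $r$ — produces the $QB_{\beta,p-\varepsilon}(I)$ inequality with $\varepsilon$ staying in the admissible range $(0,p(\beta+1))$. Getting a single $\varepsilon$ that works simultaneously to certify membership in $\widehat{Q}B_{\beta,p}(I)$, rather than an $\varepsilon$ depending on $r$, is the delicate point; one resolves it by noting that the $QB$-condition at one exponent below $p$ is all that is needed (Definition, clause (ii)), so it suffices to exhibit the inequality for one fixed admissible $\varepsilon$, which the uniform lower bound on $Hf_r$ supplies.
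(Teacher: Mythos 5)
Your overall strategy coincides with the paper's: for sufficiency, check that $H$ maps $Q_\beta(I)$ into $Q_\beta(I)$, use Theorem A (with $\psi\equiv 1$, restricted to $I$) to verify the Lebesgue-space hypothesis, and extrapolate via Theorem~\ref{thm 3}; for necessity, test \eqref{eqn z} on functions supported in $(0,r)$ and collapse the grand norm to a single exponent $p-\varepsilon_r$ via the H\"older/threshold argument. The sufficiency half is correct as you describe it (your integral representation $Hf(x)=\int_0^1 f(xu)\,du$ gives $Hf\in Q_\beta(I)$ cleanly).

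The concrete gap is your choice of test functions in the necessity half. For $-1<\beta<0$, neither $\chi_{(0,r)}$ nor $\min\{1,(r/x)^{\gamma}\}$ belongs to $Q_\beta(I)$: multiplying either by $x^{-\beta}=x^{|\beta|}$ produces a function that is \emph{increasing} on $(0,r)$, so \eqref{eqn z} cannot be applied to them and the argument does not start. The correct choice is $f_r(x)=x^{\beta}\chi_{(0,r)}(x)$, for which $x^{-\beta}f_r=\chi_{(0,r)}$ is non-increasing; it also delivers exactly the two sides of the $QB$ inequality, since $Hf_r(x)=\frac{r^{\beta+1}}{(\beta+1)x}$ for $x>r$ and $\int_0^1 f_r^{q}w=r^{\beta q}\int_0^r (x/r)^{\beta q}w(x)\,dx$, whereas with $\chi_{(0,r)}$ you would only recover $\int_0^r w$ on the right and would have to give away a factor by hand. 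Two smaller points: condition (i) is not obtained by "taking $\varepsilon\to 0$" (the grand norm excludes $\varepsilon=0$) but by passing from the inequality at exponent $p-\varepsilon_r$ to exponent $p$ using $\beta\le 0$ and $r/x\le 1$, i.e.\ the monotonicity of the classes $QB_{\beta,q}(I)$ in $q$; and your resolution of the $r$-dependence of $\varepsilon_r$ ("the uniform lower bound supplies it") is an assertion rather than an argument --- what one actually needs, and what the paper arranges, is that $\varepsilon_r$ lands in a fixed interval $(0,\sigma]$ with $\sigma<\min\{(\beta+1)p,\,p-1\}$ and that the resulting constant is independent of $r$ and of $\varepsilon_r$.
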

\begin{proof}
Let us first assume that $w\in \widehat{Q}B_{\beta,p}(I).$
Note that if $f\in Q_\beta(I),$ then for $0<t\leq s$ and $\alpha\in I,$ we have that
\[
t^{-\beta}f\lr\frac{\alpha t}{s}\rr \geq s^{-\beta}f(\alpha)
\] 
using which we get that 
\begin{align*}
s^{-\beta}Hf(s)
& \leq \frac{1}{s}\int_0^s t^{-\beta}f\lr\frac{\alpha t}{s}\rr d\alpha\\
& = t^{-\beta-1}\int_0^t f(z)dz\\
& = t^{-\beta}Hf(t)
\end{align*}
i.e., $Hf \in Q_\beta (I).$

Further, on taking $\psi \equiv 1$ in a modified form of Theorem A, and considering the functions $f$ defined on $I$ instead of $(0,\infty),$  we see that the inequality 
\[
\int_0^1 \lr Hf(x)\rr^p w(x)dx \leq C\int_0^1 f^p(x) w(x)dx
\]
holds. Now, in view of Theorem $\ref{thm 3},$ the inequality $(\ref{eqn z})$ holds.\\

Conversely, assume that the inequality $(\ref{eqn z})$ holds. Consider the test function $f_r(x)= x^\beta \chi_{(0,r)}(x)$ for $0<r<1.$ Then
\begin{align} 
\|f_r\|_{L_w^{p),\theta}(I)} &= \sup_{0<\varepsilon < p-1} \lr \varepsilon^\theta \int_0^r x^{\beta (p-\varepsilon)}w(x)dx\rr^{\frac{1}{p-\varepsilon}} \nonumber \\
&= \max \left \{ \sup_{0<\varepsilon\leq\sigma}\varepsilon^{\frac{\theta}{p-\varepsilon}}\|f_r\|_{L_w^{p-\varepsilon}},\sup_{\sigma<\varepsilon < p-1}\varepsilon^{\frac{\theta}{p-\varepsilon}}\|f_r\|_{L_w^{p-\varepsilon}}\right\}, \nonumber 
\end{align}
where $\sigma$ is chosen such that $0 < \sigma < \displaystyle \min \{(\beta + 1)p,~p-1\}.$ Now, for $\sigma < \varepsilon < p-1,$ taking the conjugate indices $\frac{p-\sigma}{p-\varepsilon}$ and $\frac{p-\sigma}{\varepsilon-\sigma},$ on using H\"older's inequality  we obtain
\begin{align} \label{eqn042}
\|f_r\|_{L_w^{p-\varepsilon}}
& \leq \lr\int_0^r x^{\beta (p-\sigma)} w(x)dx\rr^\frac{1}{p-\sigma} \lr W(I) \rr^\frac{\varepsilon-\sigma}{(p-\sigma)(p-\varepsilon)}  \nonumber \\
& \leq \lr\int_0^r x^{\beta (p-\sigma)} w(x)dx \rr^\frac{1}{p-\sigma} \lr W(I) +1\rr^\frac{p-1-\sigma}{p-\sigma}.   
\end{align}
Thus, on using (\ref{eqn042}) and an argument from (\cite{amksh}, Theorem 3.1), we have
\begin{align} \label{eq44}
\|f_r\|_{L_w^{p),\theta}(I)}
& \leq \max \left \{ 1,p^\theta \sigma^{-\frac{\theta}{p-\sigma}}\lr W(I) +1 \rr^\frac{p-1-\sigma}{p-\sigma}\right\}\sup_{0<\varepsilon\leq\sigma} \varepsilon^{\frac{\theta}{p-\varepsilon}}\|f_r\|_{L_w^{p-\varepsilon}} \nonumber \\
& = \max \left \{ 1,p^\theta \sigma^{-\frac{\theta}{p-\sigma}}\lr W(I) +1\rr^{\frac{p-1-\sigma}{p-\sigma}}\right\} \varepsilon_r^{\frac{\theta}{p-\varepsilon_r}}\|f_r\|_{L_w^{p-\varepsilon_r}} \nonumber \\
& = C_1 \lr \varepsilon_r^\theta \int_0^r x^{\beta (p-\varepsilon_r)}w(x)dx\rr^{\frac{1}{p-\varepsilon_r}} 
\end{align}
for some $0 < \varepsilon_r \le \sigma,$ where $C_1 := \displaystyle \inf_{0<\sigma < (\beta +1)p}  \max \left \{ 1,p^\theta \sigma^{-\frac{\theta}{p-\sigma}}\lr W(I) +1\rr^{\frac{p-1-\sigma}{p-\sigma}}\right\}.$
Further, note that 
\begin{align*}
\int_0^1 \lr Hf_r(x)\rr^{p-\varepsilon}w(x)dx
& \geq \int_r^1 \lr Hf_r(x)\rr^{p-\varepsilon}w(x)dx \\ 
&= \lr\frac{r^{\beta+1}}{\beta+1}\rr^{p-\varepsilon}\int_r^1 \frac{w(x)}{x^{p-\varepsilon}}dx
\end{align*}
so that
\begin{align*}
\|Hf_r\|_{L_w^{p),\theta}(I)}
& \geq \frac{r^{\beta+1}}{\beta+1}\sup_{0< \varepsilon < p-1}\lr \varepsilon^\theta \int_r^1\frac{w(x)}{x^{p-\varepsilon}}dx\rr^{\frac{1}{p-\varepsilon}}\\
& \geq \frac{r^{\beta+1}}{\beta+1}\lr{\varepsilon_r}^\theta \int_r^1\frac{w(x)}{x^{p-\varepsilon_r}}dx\rr^{\frac{1}{p-\varepsilon_r}}.
\end{align*}
The above estimate together with (\ref{eq44}), and the assumption that $(\ref{eqn z})$ holds, gives that 
\[
\frac{r^{\beta+1}}{\beta+1}\lr{\varepsilon_r}^\theta \int_r^1\frac{w(x)}{x^{p-\varepsilon_r}}dx\rr^{\frac{1}{p-\varepsilon_r}} \leq CC_1 \lr{\varepsilon_r}^\theta \int_0^r x^{\beta(p-\varepsilon_r)}w(x)dx\rr^{\frac{1}{p-\varepsilon_r}}.
\]
Therefore,
\begin{align*}
\int_r^1 \lr\frac{r}{x}\rr^{p-\varepsilon_r}w(x)dx &\leq \lr CC_1 (\beta+1)\rr^{p-\varepsilon_r}\int_0^r \lr\frac{x}{r}\rr^{\beta(p-\varepsilon_r)}w(x)dx \\
&\leq \lr CC_1 (\beta+1) +1\rr^p\int_0^r \lr\frac{x}{r}\rr^{\beta(p-\varepsilon_r)}w(x)dx.
\end{align*}
Thus, $w \in QB_{\beta, p-\varepsilon_r}(I),$ where $0< \varepsilon_r < (\beta+1)p.$  Consequently, $w \in QB_{\beta, p}(I)$ and hence $w \in \widehat{Q}B_{\beta, p}(I).$
\end{proof}
 
\bigskip
\noindent {\it Acknowledgment.} The first author acknowledges the MATRICS Research Grant No. MTR/2019 /000783 of SERB, Department of Science and Technology (DST), India. Also, the second author acknowledges the research fellowship award No.:  09/045(1716)/2019-EMR-I of Council of Scientific and Industrial Research (CSIR), INDIA. \\

\noindent {\it Conflict of interest statement.} On behalf of all authors, the corresponding author states that there is no conflict of interest.

%\bigskip

%\bigskip
%\newpage

%\vspace{15pt}

%\noindent Department of Mathematics\\
%Dyal Singh College (University of Delhi)\\
%Lodhi Road, Delhi - 110003, India\\
%(Email : arunpalsingh@dsc.du.ac.in)

%\bigskip

%\noindent  Department of Mathematics\\
%University of Delhi, Delhi - 110007, India\\
%(Email: drrpanchal0@gmail.com)

%\bigskip

%\noindent  Department of Mathematics\\
%South Asian University\\
%Akbar Bhawan, Chanakya Puri, \\
%New Delhi - 110 021, India\\
%(Email : pankaj.jain@sau.ac.in; pankajkrjain@hotmail.com)

%\bigskip

%\noindent Department of Mathematics\\
%Lady Shri Ram College for Women (University of Delhi)\\
%Lajpat Nagar, Delhi - 110024, India\\
%(Email : monikasingh@lsr.du.ac.in)

\end{document}